\newcommand{\PP}{\mathbb{P}}
\newcommand{\E}{\mathbb{E}}
\newcommand{\R}{\mathbb{R}}
\newcommand{\N}{\mathbb{N}}
\renewcommand{\hat}{\widehat}
\renewcommand{\phi}{\varphi}
\newcommand{\rmd}{\mbox{d}}
\newcommand{\norm}[1]{\left\Vert #1 \right\Vert}
\newcommand{\rond}[1]{\mathscr{#1}}
\newcommand{\EE}[2][]{\E_{#1}\left[#2\right]}
\newcommand{\var}[2][]{\operatorname{Var}_{#1}\left(#2\right)}
\newcommand{\units}[1]{\mathbf{1}_{#1}}
\newcommand{\eps}{\varepsilon}
\newcommand{\ep}{\varepsilon}
\newcommand{\rk}{\mathfrak{r}} 
\newcommand{\pw}{{\rm pw}} 
\newcommand{\x}{\mathbf{x}} 
\newcommand{\cons}{\kappa}
\definecolor{C}{RGB}{ 178, 41, 1 }
\newtheorem{thm}{Theorem}
\newtheorem{lemma}{Lemma}
\newtheorem{cor}{Corollary}
\newtheorem{rmk}{Remark}
\newtheorem{defi}{Definition}
\newtheorem{exe}{Example}
\begin{document}
\title{Adaptation to inhomogeneous smoothness  for  densities with irregularities}

\author{Céline Duval  \footnote{Universit\'e de Lille, CNRS, UMR 8524 - Laboratoire Paul Painlev\'e, F-59000 Lille, France.},
Émeline Schmisser\footnote{Universit\'e de Lille, CNRS, UMR 8524 - Laboratoire Paul Painlev\'e, F-59000 Lille, France.}}

\date{}

\maketitle

\begin{abstract} We estimate on a compact interval densities with isolated irregularities, such as discontinuities or discontinuities in some derivatives. From independent and identically distributed observations we construct a kernel estimator with non-constant bandwidth, in particular in the vicinity of irregularities. It attains  faster rates, for the risk $L_{p},\ p\ge 1$, than usual estimators with a fixed global bandwidth.  Optimality of the rate found  is established by a lower bound result. We then propose an adaptive method inspired by Lepski's method for automatically selecting the variable  bandwidth, without any knowledge of the regularity of the density nor of the points where the regularity breaks down. The procedure is illustrated numerically on examples.
 \end{abstract}
\noindent {\sc {\bf Keywords.}} { \small Adaptive procedure; Density estimation; Inhomogeneous smoothness; Irregularities.}\\

%

	\section{Introduction}
	
	\subsection{Context and motivations}

It has been observed in kernel non-parametric estimation that considering a variable bandwidth $h$ depending on the local behavior of the estimated function and on the dataset improves the rate of convergence. 
Such studies date back to \cite{breiman1977variable} which provides an extensive simulated study for multivariate density estimation and shows that  adding a term depending on the repartition of the observations improves the results. Afterwards, a number of options for the window were studied, most common choice of $h(x)$  involves nearest neighbor ideas or directly the estimated density  (see \textit{e.g.} \cite{abramson1982bandwidth,HM,HHM,TS}).  In  \cite{fan1992variable} Fan and Gijbels  consider a regression problem, the bandwidth is selected proportional to $h_{n}\alpha(X)$ where $h_{n}$ depends on the sample size and $\alpha$ is a function proportional to $f_{X}^{1/5}$ where $f_{X}$ is the density of the design. 
In all these references, the estimated densities have a fixed and known global regularity over the estimation interval.\medskip

Choosing a variable bandwidth is all the more important when estimating functions with varying regularity over the estimation interval, as studied in the present article. Indeed, if no special treatment is carried out to adapt locally to the regularity, a global choice of window is made, and the resulting estimator will consider that the irregularity represents the typical behavior of the function even if it is a singularity point. 

Studies have been conducted in specific cases. For instance, \cite{schuster1985incorporating} proposes a symmetrization device to improve the rate of convergence when the locations of the discontinuities are known.  In \cite{Eeden}, densities with non continuous derivatives or that are non continuous are  considered  (see also \cite{Gosh}).  She shows that the square $L_{2}$ risk attains  respectively  the rate $n^{-3/4}$ in the first case and $n^{-1/2}$ if the density is not $C^{0}$. The work \cite{CH} extends \cite{Eeden} and shows how the rate of convergence of a function that is $m$ times differentiable with  the derivative of order $m$ having simple discontinuities can be improved to obtain the rate of a $m+\tfrac12$ regular function. A similar context is examined in  \cite{Ka}, where  the regularity of the function is defined through its local behavior, or in \cite{van1997note} which proposes a central limit theorem for the integrated $L_{2}$ risk.    A numerical method is introduced in  \cite{DGL} to estimate densities that are  possibly discontinuous in an unknown finite number of points. It consists in viewing the estimation  problem as a regression problem, no theoretical study of the method is conducted. 

 The problem of the adaptive choice of window is a well studied question and a local bandwidth selection rule has been introduced in Lepski \textit{et al.} (1997) \cite{lepski1997optimal} in a regression framework (see also \cite{goldenshluger2008universal,MR3230001}). However the associated rates studied in these articles refer to a global regularity parameter which does not allow to fully appreciate the interest of adopting a local window instead of a global one.\medskip

A closely related issue is the study of bias correction at the boundary of the support of a compactly supported density. It has been studied in \cite{CFM} (see also the references therein and  \cite{DGL}) where a minimax and numerically feasible method based on local polynomial smoother is proposed. The support of the estimated function is known and the function has no irregularities in the interior of its definition domain. Some works also study the related but nonetheless separate problem of estimating the discontinuity points of the estimated function (see \textit{e.g.}  \cite{couallier1999estimation}).

\bigskip

In this work, focusing on kernel density estimators, we aim to extend the previous studies by defining classes of functions with isolated irregularities. Informally, we focus on functions that may be highly regular but have discontinuities (for the function itself or one of its derivatives, which may also be unbounded). This is the case for many common laws such as mixtures of uniform  or  Laplace distributions, which are infinitely differentiable except at certain points. Estimation of densities with compact support presenting irregularities at the boundaries when estimated over a wider interval is included in our framework. We highlight that a more refined study of the bias terms of the associated estimators and an appropriate choice of bandwidth can improve convergence rates.

  More precisely, if a function has a regularity $0<\alpha<\infty$ on a compact estimation interval $I_{0}$ except at certain points where a regularity $0\le\beta<\alpha$ is observed, a suitable choice of window $h_{0}$ (see Equation \eqref{eq:h0}) gives the following convergence rate (see Theorem \ref{thm:UBPH}) \[ \E\left[\int_{I_{0}}|\hat{f}_{h_0(x)}-f(x)|^p\rmd x\right] \leq C \left(n^{-\frac{p(\beta+1/p)}{2 \beta+1}}+\frac{\log(n)}{n}\units{p=2}+n^{-p\frac{\alpha}{2\alpha+1}}\right), \] where $ C$ is a positive constant. If $p\in[1,2]$ we find the rate associated with the greater of the two regularities and if $p>2$ we have a strict improvement in the exponent of the rate compared with the case where the function is seen to have overall $\beta$ regularity. A wide range of examples of classical laws are detailed to illustrate the improvement obtained compared to the classical case where the regularity is defined by a global parameter. To our knowledge, these results are novel in the literature. 

After studying the optimality of such  rate, we adapt the results of \cite{lepski1997optimal} to a density framework and obtain an adaptive inhomogeneous window selection method (see Corollary \ref{cor}). This method does not require any knowledge on the different regularities of the estimated density nor on the locations of the regularity breaks.

\bigskip

The paper is organised as follows. In the rest of this section, classical results on kernel density estimator are given as well as the associated rates of convergence, which will be a benchmark for our study. 

In Section \ref{sec:Princ} two classes of smoothness are introduced, over which we propose and study variable bandwidth estimators. 
The first is a class of piecewise H\"older functions for which considering an estimation window adapted to irregularities makes it possible in the case a  $L_{p},\ p\in[1,2]$ loss to recover the estimation rate of the irregularity-free function (see Theorem \ref{thm:UBPH}) and for a $L_{p},\ p>2$ loss to strictly improve the rate compared to the case where a fixed-window estimator is considered. The optimality of this rate is established in Theorem \ref{thm:LB}. This class of functions does not allow densities with unbounded derivatives, as  is the case for some Beta distributions. We introduce a second class of functions which are bounded but whose derivatives can be unbounded (see Theorem \ref{thm:UBDP}). An example of an unbounded density is also investigated in Example \ref{ex:5}. In Section \ref{sec:adapt}, an adaptive procedure is proposed and studied. 
It generalises to the density estimation problem the procedure introduced in Lepski \textit{et al.} (1997) \cite{lepski1997optimal}. It is illustrated numerically in Section \ref{sec:Num}. Finally, Section \ref{sec:prf} collects the different proofs.

\subsection{Kernel estimators}

	Consider $X_{1},\dots X_{n}$ i.i.d. (independent and identically distributed) random variables with density $f$ with respect to the Lebesgue measure. 
	We consider kernel density estimators, where the kernel satisfies the following Assumption.
	\begin{enumerate}[label={\textcolor{blue}{($\mathcal{A}$0)}}]
\item\label{Ass:K} Let $K$ be a symmetric bounded kernel  of order $\rk\in\N\setminus\{0\}$ with support in $[-1,1]$:  $\|K\|_{1}=1$ and for any $k, 1\leq k\leq \rk{-1},$ $\int x^k K(x)\rmd x=0$. 
\end{enumerate}
As $K$ is bounded with support in $[-1,1]$, the $L_p$ and the $L_2$-norms $\|K\|_{p}$ and $\|K\|_2$ are finite, and, for any $k>0$,  $\int |x|^k |K(x)|\rmd x<\infty$. 
	Assuming the kernel to be compactly supported simplifies the proofs, this constraint is also imposed in 
	 \cite{lepski1997optimal}. {As the kernel can be chosen, it is not a strong assumption}. 
Recall the definition of the classical kernel density estimator of $f$
\begin{align*}\hat f_h(x)=\frac 1n \sum_{i=1}^n K_h(X_i-x), \quad K_h(u)=\frac 1h K\left(\frac uh\right),\quad {{1\ge h}}> 0.\end{align*} The restriction to bandwidths $h$ smaller than $1$  is technical but natural as the bias term tends towards 0 with $h$. \\

\textbf{Notations}
Let us denote by $\star$ the convolution product, $u\star v(x)=\int u(t)v(x-t)\rmd t$ for functions $u, v$ such that the integral is well defined. For any function $w$, we set $w_h:=w\star K_h$. Denote by $\|.\|_{p,I}$, $p\ge 1$ and $I\subset\R$ an interval, the $L_{p}$-norm on $I$, namely $\|u\|_{p,I}:=\left(\int_{I}|u(x)|^{p}\rmd x\right)^{1/p}.$ Denote by $\lfloor \alpha\rfloor$  the integer \emph{strictly} less than the real number $\alpha$. In the sequel $C$ denotes a positive constant not depending on $n$ whose value may change from line to line, its dependancies are sometimes provided in indices. Finally, $a_{n}\lesssim b_{n}$ means that $a_{n}\leq Cb_{n}$.\\

It has been established (see \textit{e.g.} Tsybakov \cite{Tsyb}) using Rosenthal's inequality that when $f$ is bounded, for any $p\ge 1$ the following bias variance decomposition holds for the point-wise risk, for any $x\in\R$ and $h>0$,
\begin{align*}\E[|\hat f_{h}(x)-f(x)|^{p}]\le  C\left(|f(x)-f_{h}(x)|^{p}+\frac{1}{(nh)^{\frac p2}}\right),
\end{align*}
integrating this on $I_0$ a compact set, we get a bound for the integrated risk in
\begin{align*}\E[\|\hat f_{h}-f\|_{p, I_0}^{p}]\le  C'\left(\|f-f_{h}\|_{p,I_0}^{p}+\frac{1}{(nh)^{\frac p2}}\right)
\end{align*} for  positive constants $C,\ C'$ depending on $p$, $\|f\|_{\infty}$, $\|K\|_{2}$ and the length of $I_{0}$.

To derive the resulting rate of convergence one requires regularity assumptions on the function $f$. 
For instance if $f$ belongs to a Hölder ball  with regularity parameter $\alpha $ defined below, the punctual  and integrated bias terms can be bounded.

\begin{defi}[H\"older space]\label{holder}
A function $f$ supported on an interval $I\subset\R$ belongs to the H\"older space $\Sigma(\alpha, L,M,I)$ for $\alpha,L,M$ positive constants  if $f$ is in $C^{(\lfloor \alpha\rfloor)}(I)$, $\norm{f}_{\infty}\leq M$ and if for all $x,x'\in I$, $|x-x'|\leq 1$, it holds that $|f^{(\lfloor \alpha\rfloor)}(x)-f^{(\lfloor \alpha\rfloor)}(x')|\le L|x-x'|^{\alpha-\lfloor \alpha\rfloor}$.
\end{defi}
In general, the integrated risk is evaluated on Nikolski balls (see Tsybakov~(2009) \cite{Tsyb}, Definition 1.4 p.13), however for the classes of isolated irregularities we consider in the sequel, Hölder spaces are better adapted and provide sharper bounds, even when the bandwidth is fixed (see also \cite{Eeden}).

Therefore, if $f\in\Sigma(\alpha,L,M,I)$ with $\alpha$ greater than the order of the kernel $K$, for any $h<1$, the bias term $\|f_{h}-f\|^p$ is of order $h^{\alpha p}$.
 It follows  that selecting $h^{*}=n^{-\frac{1}{2\alpha+1}}$ leads to the minimax  rate $n^{-\frac{\alpha p}{2\alpha+1}}$ (see Hasminskii and Ibragimov \cite{10.1214/aos/1176347736}). This rule is relevant when the regularity of $f$ is well described by $\alpha$. Otherwise, we show that on the classes considered in the next Section, which allow densities with isolated irregularities, for the density itself or one of its derivatives, that a finer computation of the bias term enables to bypass the irregularity and provide faster rates of convergence for the integrated $L_{p}$-risk, $p\ge 1$. This motivates the use of a spatially adapted adaptive procedure as introduced in Section \ref{sec:adapt}.

	\section{Inhomogeneous bandwidth estimators\label{sec:Princ}}

\subsection{Rates on piecewise H\"older functions}

We introduce a class of regularity where the functions are piecewise Hölder such that on each sub-interval $]x_i, x_{i+1}[$ of the associated subdivision, the function is extendable on the right and on the left into a Hölder function on the interval $[x_i, x_{i+1}]$. A similar condition is introduced in  \cite{Eeden} (see condition B) for densities at most twice differentiable or in \cite{van1997note} (see Condition F therein).

\begin{defi}[Piecewise Hölder space]\label{def:PH}
Let $\alpha$, $L$, $M$ be positive, $I=]a,b[\subset \R$, $-\infty\le a<b\le \infty$, an interval and $\x=(x_1,\ldots,x_m)$ a vector such that $a<x_1<\ldots<x_m<b$ for some integer $m$.  Define $\Sigma_{{\rm pw}}(\x,\alpha,L,M,I)$ a set of functions $f$ such that, for any interval $[x_i,x_{i+1}]$, we can define a version $\bar f_i$ of $f$ that belongs to $\Sigma(\alpha,L,M,[x_i,x_{i+1}])$, for all $0\le i\le m$, where  $x_{0}= a $ and $x_{m+1}=b$.  If $a=-\infty$ (or $b=\infty$) then we impose instead that  $\bar{f}_{i}$ belongs to $\Sigma(\alpha,L,M,]-\infty,x_1])$ (or $\Sigma(\alpha,L,M,[x_m,\infty[)$) for $i=0$ (or $i=m$). 

Define the piecewise Hölder functions $\Sigma_{\pw}(\alpha,L,M,I)$ as: $f$ belongs to $\Sigma_{\pw}(\alpha,L,M,I)$ if there exists a vector $\x$ such that $f$ belongs to $\Sigma_{\pw}(\x,\alpha,L,M,I)$. 
\end{defi}

In other words, $f$ defined on $I$ belongs to $\Sigma_{\pw}(\alpha,L,M,I)$ if: $f$ is bounded by $M$ and there exists a vector $\x$ such that 
\begin{itemize}	
	\item $\forall x\notin\{ x_1,\ldots,x_{m}\}$, $f^{(\ell)}$ exists where $\ell=\lfloor \alpha\rfloor$, 
	\item $\forall  0\leq i\leq m$, $\forall x,x' \in [x_{i},x_{i+1}]$,  $|\bar f_i^{(\ell)}(x)-\bar f_i^{(\ell)}(x')|\leq L |x-x'|^{\alpha-\ell}$.
\end{itemize}
In particular it implies that $\bar f_{i}$ admits bounded derivatives up to order $\ell$ on each interval $[x_i,x_{i+1}],\ 0\le i\le m$. For example, the uniform distribution on $[0,1]$ is not in $C^0(\R)$, but belongs to $\Sigma_{\pw}(\ell,0,1,\R)$  for any integer $\ell$ (see also Example \ref{ex:1} below). The Laplace distribution  is continuous on $\R$, but its derivative is not continuous at 0:  it belongs to $\Sigma_{\pw}(\ell,\lambda^{\ell},\lambda,\R)$  for any integer $\ell$ (see also Example \ref{ex:2} below). \medskip

In the sequel, we consider densities in the space  $ \Sigma_{\pw}(\mathbf{x},\alpha,L,M,I)\cap \Sigma(\beta,L,M,I)$, this means that we suppose that at the irregularity points of $f$, the density has a regularity larger than $\beta\ge0$. Note that 
if the constant $M$ controlling the infinite norm of $f$ is indeed the same on both spaces, 
the constant $L$ could be chosen differently on  $ \Sigma_{\pw}(\mathbf{x},\alpha,L,M,I)$ and $ \Sigma(\beta,L,M,I)$. However to simplify notations we choose the same in both cases.\medskip

We estimate the density $f$ on a compact set $I_{0}$, the fact that it is bounded is essential here as we integrate the punctual risk to obtain a bound for the integrated one. For reasons of readability, we estimate the density on a compact interval $I_{0}$ smaller than the interval $I=]a,b[$ where the function is defined, $ -\infty\le a<b\le+\infty$.
Namely, we estimate $f$ on the compact interval $I_{0}=[c,d]\subset I$. Indeed, to construct our estimator, we need that for all bandwidth $1\ge h\ge 0$, it holds that $[c-h,d+h]\subset]a,b[$, which is satisfied if $[c,d]\subset[a+1,b-1]$. In practice, under Assumption \ref{Ass:K}, it is enough to impose  that $[c,d]\subset[a+n^{-1/(1+2\rk)}; b-n^{-1/(1+2\rk)}]$ as the largest bandwidth   is bounded by $n^{-1/(1+2\rk)}$. 
\begin{thm}\label{thm:UBPH}
Let $f$ be a density in $ \Sigma_{\pw}(\mathbf{x},\alpha,L,M,I)\cap \Sigma(\beta,L,M,I)$ for  in $I=]a,b[$ and positive constants  $0\le\beta<\alpha\leq \rk$, $L$ and $M$.    Under Assumption \ref{Ass:K},  define the bandwidth function $h_{0}$ on  $I_{0}\subset[a+1,b-1]$   by,  $\forall x\in I_{0}$
	\begin{align}
	\label{eq:h0}
	h_0(x)=\begin{cases}
		\left(\cons n\right)^{-\frac{1}{2\beta+1}}& \text{ if $\exists i\in\{1,\ldots, m\}$, } |x-x_{i}|\leq \left( \cons n\right)^{-\frac{1}{2\beta+1}},\\ 
		|x-x_{i}|& \text{ if  $\exists i\in\{1,\ldots, m\}$, } \left(\cons n\right)^{-\frac1{2\alpha+1}} \geq |x-x_{i}|> \left(\cons n\right)^{-\frac1{2\beta+1}}, \\
		\left(\cons n\right)^{-\frac1{2\alpha+1}} &\text{ elsewhere, }\end{cases} 
	\end{align}  
where  $\cons =\frac{  \norm{K}_1^2L^2}{\norm{K}_{2}^2M}$.
		Then,  for any $p\ge 1$
	it holds that 
	\[  \E\left[\|\hat{f}_{h_0}-f\|_{p,I_{0}}^p\right] \leq C  \left(n^{-\frac{p\beta+1}{1+2 \beta}}+\frac{\log(n)}{n}\units{p=2}+n^{-p\frac{\alpha}{2\alpha+1}}\right),\] where $ C$ is a positive constant  depending on $(p,\rk,L,M,\alpha,\beta,
	K)$.\end{thm}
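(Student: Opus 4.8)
The plan is to reduce the integrated risk to a bias integral and a variance integral, both controlled region by region. Since $h_{0}(x)$ is deterministic, for each fixed $x$ the estimator $\hat f_{h_0(x)}(x)$ is an ordinary kernel estimator of bandwidth $h_0(x)$, so the pointwise bound quoted before Definition~\ref{holder} applies verbatim (the constant depends only on $p$, $\|f\|_\infty\le M$ and $\|K\|_2$, uniformly in $x$ and in the bandwidth). Integrating over $x\in I_0$ and invoking Fubini gives
\[\E\left[\|\hat f_{h_0}-f\|_{p,I_{0}}^p\right] \lesssim \underbrace{\int_{I_{0}} |f(x) - f_{h_0(x)}(x)|^p\,\rmd x}_{=:B} \;+\; \underbrace{n^{-p/2}\int_{I_{0}} h_0(x)^{-p/2}\,\rmd x}_{=:V},\]
and it remains to bound $B$ and $V$. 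Here the inclusion $I_0\subset[a+1,b-1]$ guarantees every kernel support $[x-h_0(x),x+h_0(x)]$ stays inside $]a,b[$, so no boundary effect of the domain of $f$ intervenes. I then partition $I_0$ into the \emph{near} set $N=\{x:\exists i,\ |x-x_i|\le(\cons n)^{-1/(2\beta+1)}\}$, the \emph{transition} set $T$, and the \emph{far} set $F$, matching the three cases of \eqref{eq:h0}; for $n$ large these neighbourhoods are disjoint and every support meets at most one breakpoint, the finitely many small values of $n$ being absorbed in $C$.

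\emph{Bias term $B$.} On $F$ and $T$ the support $[x-h_0(x),x+h_0(x)]$ contains no irregularity in its interior: on $T$ the choice $h_0(x)=|x-x_i|$ places $x_i$ exactly at an endpoint, while on $F$ one has $h_0(x)<|x-x_i|$ for every $i$. Hence on that support $f$ agrees almost everywhere with the genuinely $\alpha$-H\"older version $\bar f_j$, and a Taylor expansion to order $\ell=\lfloor\alpha\rfloor$ combined with the vanishing moments of the order-$\rk\ge\alpha$ kernel yields $|f(x)-f_{h_0(x)}(x)|\lesssim L\,h_0(x)^{\alpha}$. On $N$ I use instead the global membership $f\in\Sigma(\beta,L,M,I)$, giving $|f(x)-f_{h_0(x)}(x)|\lesssim L\,h_0(x)^{\beta}=L(\cons n)^{-\beta/(2\beta+1)}$. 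Integrating, the $N$-part contributes $\lesssim m\,(\cons n)^{-1/(2\beta+1)}(\cons n)^{-p\beta/(2\beta+1)}\lesssim n^{-(p\beta+1)/(2\beta+1)}$, the $F$-part $\lesssim |I_0|\,(\cons n)^{-p\alpha/(2\alpha+1)}\lesssim n^{-p\alpha/(2\alpha+1)}$, and the $T$-part, by integrating $|x-x_i|^{p\alpha}$ up to the cutoff $(\cons n)^{-1/(2\alpha+1)}$, is $\lesssim n^{-(p\alpha+1)/(2\alpha+1)}$, which is dominated by the $F$-part.

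\emph{Variance term $V$.} On $F$, $h_0^{-p/2}=(\cons n)^{p/(2(2\alpha+1))}$ over a set of length $\lesssim1$, and after multiplying by $n^{-p/2}$ the exponents combine to exactly $n^{-p\alpha/(2\alpha+1)}$; the identical computation on $N$ (length $\lesssim(\cons n)^{-1/(2\beta+1)}$) gives $n^{-(p\beta+1)/(2\beta+1)}$. The delicate term is the transition set, where $h_0(x)=|x-x_i|$ forces
\[V_T \;:=\; n^{-p/2}\int_{(\cons n)^{-\frac{1}{2\beta+1}}}^{(\cons n)^{-\frac{1}{2\alpha+1}}} t^{-p/2}\,\rmd t,\]
and this is exactly where the trichotomy of the statement appears. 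For $p>2$ the integrand is integrable-singular at the lower endpoint, which governs the integral and yields $V_T\lesssim n^{-(p\beta+1)/(2\beta+1)}$; for $p<2$ the upper endpoint dominates and $V_T\lesssim n^{-(p\alpha+1)/(2\alpha+1)}$, again absorbed into the $\alpha$-term; for $p=2$ the integral is logarithmic, $V_T\lesssim n^{-1}\log\big((\cons n)^{\frac{1}{2\beta+1}-\frac{1}{2\alpha+1}}\big)\lesssim n^{-1}\log n$, producing the middle term $\tfrac{\log(n)}{n}\units{p=2}$. Collecting the contributions of $B$ and $V$ over $N$, $T$, $F$ gives the announced bound.

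I expect the main obstacle to be the bias estimate on the transition set $T$: one must justify rigorously that placing the irregularity $x_i$ exactly at the boundary of the support still permits the full order-$\alpha$ bias bound. This rests on the almost-everywhere identification of $f$ with the one-sided H\"older version $\bar f_i$ on the support, together with the verification that for $n$ large the support never reaches the adjacent breakpoint $x_{i\pm1}$ (so that a single version is used throughout). The second, purely computational, difficulty is the endpoint analysis of $V_T$ above, which is responsible for the $p=2$ logarithmic correction and must be carried out separately in the three regimes $p<2$, $p=2$, $p>2$.
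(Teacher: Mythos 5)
Your proposal is correct and follows essentially the same route as the paper's proof: the standard Rosenthal/Tsybakov pointwise bias--variance bound, the three-region split matching the definition of $h_0$, the one-sided H\"older version $\bar f_i$ to get the order-$\alpha$ bias on the transition and far regions, and the endpoint analysis of $n^{-p/2}\int t^{-p/2}\,\rmd t$ on the transition set which produces the $p<2$, $p=2$, $p>2$ trichotomy and the $\log(n)/n$ term. The only (immaterial) differences are that you separate the bias and variance integrals rather than integrating the combined pointwise bound, and that the paper additionally records the intermediate inequality $|f_{h_0(x)}(x)-f(x)|\le v(h_0(x))$, which is not needed for the rate itself but is reused later for the adaptive corollary.
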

It follows that the $L_{p}$-risk, $p\in[1,2]$,  of the estimator with variable bandwidth $\hat{f}_{h_0}$ is the same as if $f$ has regularity $\alpha$ on the whole interval $I$, namely as if $f$ were in $\Sigma(\alpha,L,M,I)$. Interestingly, this result implies that the rate of convergence reached by this estimator to recover a Gaussian density is the same (for the $L_{p}$-risk $p\in[1,2]$) for a uniform on $[0,1]$, a Laplace or an exponential density estimated on a  compact interval. 
If $p>2$ and if the order of the kernel $\rk$ is greater than $\alpha$, the rate of convergence is proportional to $$n^{-\frac{p\beta}{2\beta+1}\left(1+\frac1{\beta}\right)}\vee n^{-\frac{p\alpha}{2\alpha+1}},$$ which is equal to $n^{-p\alpha/(2\alpha+1)}$ only if the minimal regularity $\beta$ satisfies $\beta\geq \left(1-2/p\right)\alpha-1/p$. In Examples \ref{ex:1} and \ref{ex:2} below, we see that this is still faster than the rate for a fixed bandwidth. 

The idea behind the definition of the bandwidth function $h_{0}$ in \eqref{eq:h0} is the following. Consider to simplify the case where there is only one discontinuity in the regularity at $\mathbf x=0$. As the kernel $K$ is supported on $[-1,1]$, the estimator at point $x$ is computed from all the $X_{j}$ such that $X_{j}\in [x-h_{0}(x),x+h_{0}(x)]$. To avoid the discontinuity point 0 we choose $h_{0}(x)$ such that $0\notin ]x-h_{0}(x),x+h_{0}(x)[$, otherwise the bias is proportional to $h_0(x)^{\beta}$. If  $|x|\ge (\cons n) ^{-1/(2\alpha+1)}$ is sufficiently apart from 0, the chosen bandwidth is the same as for a fixed bandwidth for a function of regularity $\alpha$ (the bias and the variance terms have the same order).  
 If $|x|$ is smaller  than $ (\cons n)^{-1/(2\alpha+1)}$, the bandwidth $h(x)$ should not  be too large, such that 0 does not belong to the interval  $]x-h(x),x+h(x)[$.  The bias remains small, of order   $h(x)^{\alpha}$. The variance becomes greater than the bias term, so we choose the largest bandwidth such that 0 does not belong in $]x-h(x),x+h(x)[$, namely $h_0(x)=|x|$. 
		If $|x|$ is  too small, this solution does not work any longer, indeed, the variance term would be proportional to $1/(n|x|)$. So if $|x|$ is smaller than $(\cons n)^{-1/(1+2\beta)}$, we take the bandwidth adapted for the minimal regularity $\beta$, $(\cons n)^{-1/(1+2\beta)}$. We can note that if $\beta=0$,   the bias term no longer tends to 0: there is no point in controlling the variance, and it is possible to take any value for $h$ between $1/n$ and 1. The point-wise risk will have order 1, but on an interval of length $1/n$.  It will not affect the integrated $L_2$-risk, however, for large values of $p$, this term can be predominant in the risk. 
		
		 The constant $\kappa$ in the definition of $h_{0}$ does  not affect the rate of convergence and could be replaced by 1 or any other positive value. Nonetheless, when it comes to the adaptive result and Corollary \ref{cor} below we rely on the property that for this choice of $h_{0}$ (and therefore of $\kappa$) it holds that the square biais is smaller than the majorant of the variance considered in the adaptive procedure of Section \ref{sec:adapt}.

\begin{rmk}
It is possible to allow in $\Sigma_{\pw}(\mathbf x,\alpha, L,M,I)$  specific regularities in each intervals $[x_{i},x_{i+1}]$ by replacing the pairwise $(\alpha, L)$ by vectors $\pmb{\alpha}:=(\alpha_{0},\ldots,\alpha_{m})\in\R_{+}^{m+1}$, and $\pmb{L}:=(L_{0},\ldots,L_{m})$. However it would not improve the rate which is imposed by the interval with the smallest regularity. The upper bound in Theorem \ref{thm:UBPH} would be the same with $\alpha$ replaced by $\underline {\pmb{\alpha}}=\min_{0\le i\le m}\alpha_{i}.$ 
\end{rmk}

\begin{proof}[Proof of Theorem \ref{thm:UBPH}]
	For the sake of simplicity, the proof is done when there is only one isolated irregularity, at 0.  We first control the point-wise risk on the interval $ I_{0}$ containing 0. Let $p\ge 2$, we can apply the Rosenthal inequality to the i.i.d. centered variables $Z_{i}:=K_{h_{0}(x)}(X_{i}-x)-f_{h_{0}(x)}(x)$. For that note that $\var{Z_{i}}\le \E[K_{h_{0}(x)}(X_{i}-x)^{2}]\le M\|K\|_{2}^{2}h_{0}(x)^{-1}$ and that for $p\ge 2$ after a change of variable one gets	\begin{align*}
	\E[|Z_{i}|^{p}]&\le \frac{2^{p}}{h_{0}(x)^{p}} \int\left|K\left(\frac{y-x}{h_{0}(x)}\right)\right|^{p}f(y)\rmd y\le  \frac{2^{p}M\|K\|_{p}^{p}}{h_{0}(x)^{p-1}} .
	\end{align*} 
Then 
\[
		 \E{\left[|\hat{f}_{h_0(x)}(x)-f(x)|^p\right]}\leq 2^p\left(\ \E{\left[|\hat{f}_{h_0(x)}(x)-f_{h_{0}(x)}(x)|^p\right]}+\left|f_{h_0(x)}(x)-f(x)\right|^p\right)\\ 
\]
 and by the Rosenthal inequality we get the following bound for the variance term
	\[\EE{|\hat{f}_{h_0(x)}(x)-f_{h_0(x)}(x)|^p}\leq C_p
	\max\left\{ \frac{M^{p/2}\norm{K}_2^{p}}{(nh_0(x))^{p/2}},\frac{2^{p}M\norm{K}_p^p}{(nh_{0}(x))^{p-1}}
	\right\}. 
	\]
The dominant term is proportional to $\frac{M^{p/2}\norm{K}_2^{p}}{(nh_0(x))^{p/2}}$.
Now, if $p\in[1,2)$, we derive the result from the case $p=2$ using that $x\mapsto x^{p/2}$ is concave together with the Jensen inequality ensuring that $\E[|Z|^{p}]\le \big(\E[Z^{2}]\big)^{p/2}$.
It follows, for any $p\geq 1$, 
\begin{equation}
	\EE{|\hat{f}_{h_0(x)}(x)-f_{h_0(x)}(x)|^p}\leq c_p\left( v^p(h_0(x))+|f_{h_0(x)}(x)-f(x)|^p\right) \label{eq:Rosprf1}
\end{equation}
with $v^{2}(h)$ is a bound on the  variance term defined as  \begin{align}\label{eq:vh}v(h):=\frac{M^{1/2}\norm{K}_2}{(nh)^{1/2}}\end{align}
and $c_p$ is a constant depending only on $p$. 
Depending on the value of $x$ with respect to 0, we consider two different bounds for the bias term. Let us set $\ell=
\lfloor \alpha\rfloor$.	
	
		$\bullet$ 	For $|x|>(\kappa n)^{-1/(2\alpha+1)}$, it holds that $\forall x'\in[x-h_0(x),x+h_0(x)]$, $f^{(\ell)}(x')$ exists and $|f^{(\ell)}(x)-f^{(\ell)}(x')|\leq L |x-x'|^{\alpha}$. 
		Then, we get using a change of variable and  \ref{Ass:K}
		\begin{align}
			f_{h_0(x)}(x)-f(x)&= 
	\int_{-1}^{1} K(u)\big[f(x+uh_0(x)))-f(x)\big]\rmd u\nonumber\\
&=\int_{-1}^1 K(u)\left( uh_0(x) f'(x)+\ldots +\frac{u^\ell h_0(x)^\ell}{\ell!} f^{(\ell)}(x+c(u)h_0(x))\right)\rmd u\nonumber\\
			&= h_0(x)^\ell \int_{-1}^1 K(u)\frac{u^\ell }{\ell!} (f^{(\ell)}(x+c(u)h_0(x))-f^{(\ell)}(x)))\rmd u\label{eq:BiasTayl}
		\end{align}
		where we used Taylor-Lagrange formula, for some $c(u)\in[0\wedge u,0\vee u]$, and  \ref{Ass:K}.  Therefore, we derive the upper-bound on the bias term 
		\begin{align} |f_{h_0(x)}(x)-f(x)|&\leq  h_0(x)^{ \alpha} \int_{-1}^1|u^{\alpha}K(u)|\rmd u\frac{L}{\ell!}
		\leq 
	h_0(x)^{\alpha}\norm{K}_1L\label{eq:BiasPH1}.\end{align}
			Check that for $h_0=(\cons n)^{-1/(2\alpha+1)}$, it holds
	$|f_{h_0(x)}(x)-f(x)|\leq L\norm{K}_1h_0(x)^{\alpha}=v(h_0(x)).$

		 $\bullet$ If $|x|\leq (\kappa n)^{-1/(2\beta+1)}$, the density $f$ belongs to the Hölder space $\Sigma(\beta,L,M,I)$ and the bound for the bias is done similarly replacing $\alpha$ with $\beta$. We also have that 
		 $ |f_{h_0(x)}(x)-f(x)|\leq v(h_0(x)).$

	$\bullet$ Elsewhere, selecting $h_{0}(x)=|x|$ allows to remain sufficiently away from 0 and  Equation \eqref{eq:BiasPH1} still applies. The bound  $L\norm{K}_1 h^{\alpha}$ on the bias is equal to the bound on the variance  $v(h)$ for $h^{*}=(\kappa n)^{-1/(2\alpha+1)}$, here, $h_{0}$ is smaller than this $h^{*}$. Since the function $v(h)$ is decreasing with $h$ and the bound on the bias  is increasing, we obtain that $$|f_{h_0(x)}(x)-f(x)|\le |f_{h^{*}}(x)-f(x)|= v(h^{*})\le v(h_0(x)).$$ 
Therefore, for any $x\in I_0$, we have the property that
	\begin{equation}
		|f_{h_0(x)}(x)-f(x)|\leq v(h_0(x)).\label{eq:lienbiaisvariance}
\end{equation}	This inequality will be important for the adaptive result. 

Gathering all terms and integrating on the compact interval $I_{0}$ containing 0 one gets for $p\ge 1$
	\begin{align*}
			 \E\left[\|\hat{f}_{h_0}-f\|_{p,I_{0}}^p\right] &\leq 
		 C\Bigg\{\int_{0}^{(\kappa n)^{-\frac{1}{2\beta+1}}}\hspace{-0.7cm}n^{-\frac{p\beta}{2\beta+1}}\rmd x
		+\int_{(\kappa n)^{-\frac{1}{2\beta+1}}}^{(\kappa n)^{-\frac{1}{2\alpha+1}}} |x|^{p\alpha} +\frac{1}{(n|x|)^{\frac p2}}\ \rmd x+\int_{I_{0}} n^{-\frac{p\alpha}{2\alpha+1}}\rmd x\Bigg\}\\
		&\leq C \left(n^{-\frac{p\beta+1}{2\beta+1}}+\frac{\log(n)}{n}\mathbf{1}_{p=2}+n^{-\frac{p\alpha}{2\alpha+1}}\right), 
	\end{align*}
where $C$ depends on $(p,\rk,L,\alpha,\|K\|_{2},\|K\|_{p},\|K\|_{\infty},\|u^{\alpha}K\|_{1})$. Generalization to $m$ isolated regularities in $I_{0}$ is straightforward by cutting the integral around each irregularity point $x_{i},\ 1\le i\le m$.
	\end{proof}

\begin{exe} \label{ex:1}\textbf{Uniform distribution} Let   $f(x)=\frac{1}{b-a}\units{a\leq x\leq b}$ and $I_{0}=[a-1,b+1]$. Then, $f$ is not continuous on $\R$, but belongs to $\Sigma_{\pw}(\mathbf x =(a,b),\ell,0,1/(b-a),\R)\cap \Sigma(0,1/(b-a),1/(b-a),\R)$ for any integer $\ell$.
For a fixed bandwidth $h>0$ and choosing $\ell=\rk$, where $\rk$ is the order of the kernel, we have 
  \[ \int_{-1}^1|f_h(x)-f(x)|^{p}\rmd x\lesssim \int_{|x|\leq h} 1\rmd x+\int_{h\leq |x|\leq 1} h^{p\rk}\rmd x\lesssim h.\]
 It follows that the optimal fixed window for the $L_2$-risk is $h=n^{-1/2}$ leading to a risk in $n^{-1/2}$. This is exactly the rate found by \cite{van1997note}. Our  variable bandwidth estimator attains the faster rate $n^{-2\rk/(2\rk+1)}$. 
 Similarly, the $L_p$-risk of an estimator with fixed bandwidth  is $n^{-1/(1+2/p)}$. If $\rk\ge 1$, it can be improved in $n^{-1}\vee n^{-{\rk p}/({2\rk+1})}$ considering $\hat f_{h_{0}}$, and if $p>2$ and $\rk\geq 1/(p-2)$, then the achieved rate is $n^{-1}$. \end{exe} 
  
\begin{exe}\label{ex:2} \textbf{Symetrized exponential} Let $f(x)=\frac{\lambda}{2}e^{-\lambda|x|}$ which is  continuous on $\R$, but its derivative is not continuous at 0,  and $I_{0}=[-1,1]$. For the  fixed bandwidth estimator, a convergence of $n^{-3/4}$ for the $L_2$-risk is obtained in \cite{van1997note}. Note that $f$ also belongs to $\Sigma_{\pw}(\mathbf x =0,\ell,\lambda^{\ell+1}/2,\lambda/2,\R)\cap \Sigma(1,\lambda^2,\lambda/2,\R)$ for any integer $\ell$, we set $\ell=\rk$. Then, the $L_2$-risk of  $\hat{f}_{h_0}$ is smaller than $n^{-2\rk/(2\rk+1)}\ll n^{-3/4}$ whenever $\rk\ge 2$. 
For any $p\ge 1$, the bias for a fixed bandwidth $h>0$ is bounded by
  \[\int_{-1}^1|f_h(x)-f(x)|^pdx\lesssim \int_{|x|\leq h} h^p\rmd x+\int_{h\leq |x|\leq 1} h^{\rk p}dx\lesssim h^{p+1},\] then selecting $h=n^{-1/(3+2/p)}$ performs the squared bias-variance compromise leading to a rate proportional to $n^{-(p+1)/(3+2/p)}$. 
Considering $\hat f_{h_{0}}$, it can be improved to get the faster rate $n^{-(1+p)/3}\vee n^{-\rk p/({2\rk+1})}$.

\end{exe}

\subsection{Lower bound result}

To study the optimality of the rate implied by Theorem \ref{thm:UBPH} we prove a lower bound result on a slightly larger class. Define $H_{\alpha,\beta}(\eps)=\Sigma(\beta,L,M,I)\cap \Sigma^{\eps}_{pw}(\alpha,L,M,I)$, where $f\in  \Sigma^{\eps}_{pw}(\alpha,L,M,I)$ if there exists a vector $\mathbf x=(x_{1},\ldots, x_{m})$ such that for all $i$, $f$ lies in $\Sigma(\alpha, L,M,[x_{i}+\eps,x_{i+1}-\eps]).$ Contrary to the space considered in Theorem \ref{thm:UBPH}, $H_{\alpha,\beta}(\eps)$ allows functions to have a regularity $0\le \beta<\alpha$ on an $\eps$-interval around each irregularity point. Note that   $\Sigma(\beta,L,M,I)\cap\Sigma_{pw}(\alpha,L,M,I)\subseteq \lim_{\ep\to 0} H_{\alpha,\beta}(\ep).$ 

\begin{thm}\label{thm:LB}
Let $\alpha\ge 1$ and $0\le \beta<\alpha$,  consider some bounded interval $I_{0}\subset I,$ where $I \subset\R$ is an interval, and positive constants $L$ and $M$. Let $\eps>0$, then,  for any $p\ge 1$ 
	it holds that 
	\[ \inf_{\hat f}\sup_{f\in H_{\alpha,\beta}(\eps)} \E\left[\|\hat{f}-f\|_{p,I_{0}}^p\right] \geq c  \left(n^{-\frac{p\beta+1}{2 \beta+1}}\vee n^{-\frac{p\alpha}{2\alpha+1}}\right),\] where $ c>0$ is a  constant  depending on $(p,L,M,\alpha,\beta)$ but not on $\eps$. 
\end{thm}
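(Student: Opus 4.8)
The plan is to exploit the $\vee$ in the statement: since $a\vee b\geq a$ and $a\vee b\geq b$, it suffices to establish the two lower bounds $\inf_{\hat f}\sup_{f}\E[\|\hat f-f\|_{p,I_0}^p]\gtrsim n^{-(p\beta+1)/(2\beta+1)}$ and $\inf_{\hat f}\sup_{f}\E[\|\hat f-f\|_{p,I_0}^p]\gtrsim n^{-p\alpha/(2\alpha+1)}$ separately, each over a well-chosen finite subfamily of $H_{\alpha,\beta}(\eps)$, and then take the larger. I would prove the first term, which is the genuinely new one carrying the signature of the localized irregularity, by the two-point (Le Cam) method, and the second, which is the classical Hölder $L_p$ rate, by the many-hypotheses construction of Varshamov--Gilbert combined with Fano's inequality, following Tsybakov \cite{Tsyb}.

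For the $\beta$-term, fix an irregularity location, say $0\in I_0$, set $h=h_n:=n^{-1/(2\beta+1)}$, choose a fixed smooth bump $g$ supported in $[-1,1]$ with $\int g=0$ and $\|g\|_{p}>0$, and consider the two hypotheses $f_0$ and $f_1:=f_0+\delta\, g(\cdot/h)$, where $f_0$ is a fixed smooth baseline density on $I$ bounded below by some $m_0>0$ on $[-1,1]$ and $\delta:=c_0 h^{\beta}$. The zero-mean condition keeps $f_1$ of unit mass, and for $c_0$ small it is nonnegative and its $\beta$-Hölder seminorm is $\leq L$, so $f_1\in\Sigma(\beta,L,M,I)$; since the bump is supported in $[-h,h]\subset(-\eps,\eps)$ for $n$ large, $f_1$ equals $f_0$ at distance $\geq\eps$ from $0$ and is therefore $\alpha$-regular there, whence $f_1\in H_{\alpha,\beta}(\eps)$. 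A direct computation gives the separation $\|f_0-f_1\|_{p,I_0}^p=\delta^p h\,\|g\|_p^p\asymp h^{p\beta+1}$ and, using $f_0\geq m_0$ on the support of the bump, the information bound $\mathrm{KL}(f_1^{\otimes n},f_0^{\otimes n})\leq n\,\chi^2(f_1,f_0)\lesssim n\,\delta^2 h=n\,h^{2\beta+1}=1$. As $n\,\mathrm{KL}=O(1)$, the two hypotheses are statistically indistinguishable, and Le Cam's two-point lemma yields $\inf_{\hat f}\max_{j\in\{0,1\}}\E_j[\|\hat f-f_j\|_{p,I_0}^p]\gtrsim h^{p\beta+1}=n^{-(p\beta+1)/(2\beta+1)}$.

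For the $\alpha$-term I would run the standard construction in a subinterval $J\subset I_0$ at positive distance from the irregularity. Set $\eta=\eta_n:=n^{-1/(2\alpha+1)}$, place $N\asymp \eta^{-1}$ disjoint translates $\phi_{j}:=c_1\eta^{\alpha} g((\cdot-t_j)/\eta)$ of a single smooth bump inside $J$, and set $f_\omega:=f_0+\sum_{j=1}^N \omega_j\phi_j$ for $\omega\in\{0,1\}^N$; for $c_1$ small these are densities lying in $\Sigma(\alpha,L,M,I)\subset H_{\alpha,\beta}(\eps)$. By the Varshamov--Gilbert bound one extracts $\geq 2^{cN}$ codewords pairwise $N/4$-separated in Hamming distance, whence $\|f_\omega-f_{\omega'}\|_{p,I_0}^p\gtrsim N\,\eta^{p\alpha+1}\asymp \eta^{p\alpha}$, while $\mathrm{KL}(f_\omega^{\otimes n},f_0^{\otimes n})\leq n\,\chi^2(f_\omega,f_0)\lesssim n\,N\,\eta^{2\alpha+1}=n\eta^{2\alpha}\asymp N$, so the average divergence is $\lesssim \log(\#\mathcal C)$ after shrinking $c_1$. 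Fano's inequality then gives $\inf_{\hat f}\sup_\omega \E_\omega[\|\hat f-f_\omega\|_{p,I_0}^p]\gtrsim \eta^{p\alpha}=n^{-p\alpha/(2\alpha+1)}$, and combining the two bounds and maximizing yields the claim.

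The point requiring care is the uniformity of $c$ in $\eps$. In the $\beta$-construction the bump width $h_n\to 0$, so the perturbation fits inside the $\eps$-collar around the irregularity as soon as $n\geq n_0(\eps)$; the resulting constant in $h^{p\beta+1}$ depends only on $g$, $m_0$, $\|g\|_p$ and the Le Cam threshold, hence not on $\eps$, and since the assertion concerns the rate the threshold $n_0(\eps)$ is immaterial. The remaining, essentially routine, verifications are (i) that $f_0$ plus the perturbations stays a nonnegative unit-mass density lying in $\Sigma(\beta,L,M,I)$ and $\Sigma(\alpha,L,M,I)$ with the \emph{same} constants, handled by taking the amplitudes $c_0,c_1$ and the oscillation of $f_0$ small enough, and (ii) the boundary case $\beta=0$, where $\delta\asymp h^{0}$ is of constant order so that nonnegativity of $f_1$ and the lower bound $f_0\geq m_0$ must be arranged explicitly, while the separation and information computations are unchanged and still deliver the rate $n^{-1}$. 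I expect (i) together with the $\eps$-independence to be the main obstacle to writing cleanly, the rest being standard applications of the two-point and Fano machinery.
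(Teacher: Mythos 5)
Your proposal is correct and follows essentially the same route as the paper: a two-point (Le Cam) argument with a single bump of width $n^{-1/(2\beta+1)}$ and amplitude $\asymp h^{\beta}$ at the irregularity for the $n^{-(p\beta+1)/(2\beta+1)}$ term, and a Varshamov--Gilbert/Fano construction with $\asymp n^{1/(2\alpha+1)}$ disjoint bumps of amplitude $\asymp \eta^{\alpha}$ for the $n^{-p\alpha/(2\alpha+1)}$ term, with the same scalings and the same resolution of the $\eps$-uniformity (the bump fits in the $\eps$-collar for $n$ large). The only cosmetic differences are that the paper works with the $\chi^{2}$-divergence directly rather than bounding KL by $\chi^{2}$, and takes an explicit Gaussian baseline $f_{0}$.
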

The proof  is provided in Section \ref{sec:prf}.  In the proof observe that a possible value for $\eps$ is $n^{-1/(2\beta+1)}$.

\subsection{Classes allowing unbounded derivatives}

The case where derivatives of the density are unbounded are not covered by Theorem \ref{thm:UBPH} even though it encompasses classical examples such as some beta distributions. In this case also, the rate of classical kernel density estimators can be improved considering variable bandwidth estimators (see Examples \ref{ex:3}, \ref{ex:4} below and  \ref{ex:5} where the density itself is unbounded). Consider the following class of bounded densities whose derivatives may be unbounded.
\begin{defi}[Piecewise Differentiable space]\label{def:PHU}
For an integer $\ell$, positive constants $L$, $M$, $\gamma$,  an interval $I\subset \R$ and $\x=(x_1,\ldots,x_m)$ a vector in $I$, define  $D_{\pw}(\x,\ell,\gamma,L,M,I)$   a space of piecewise differentiable functions  as follows. A function $f$ belongs to $D_{\pw}(\x,\ell,\gamma,L,M,I)$ if  $\norm{f}_{\infty,I}\leq M$ and  for all $x\in I\setminus\{x_1,\ldots,x_m\}$, $f^{(\ell)}$ exists  and satisfies  $|f^{(\ell)}(x)|\leq L\left(\max_{1\le i\le m} |x-x_i|^{-\gamma}\vee 1\right)$. 

We define $D_{\pw}(\ell,\gamma,L,M,I)$ as: $f$ belongs to $D_{\pw}(\ell,\gamma,L,M,I)$ if there exists a vector $\x$ such that $f$ belongs to $D_{pw}(\x,\alpha,L,M,I)$. 
\end{defi}

Assume that $\ell\leq \rk$, indeed the specific conditions imposed on the derivative $f^{(\ell)}$  are only visible in the computation of the bias term if $\ell\leq \rk$. As in the previous result we estimate $f$ on a compact interval $I_{0}$.

\begin{thm}\label{thm:UBDP}
Consider a density $f\in D_{\pw}(\x,\ell,\gamma,L,M,I)\cap \Sigma(\beta,L,M,I)$ with $0\le \beta<\ell$. Set $c_0:=c_{0}(f)=\min\{1,\frac{1}{2}\underset{1\le i\le m}{\min}|x_{i+1}-x_i|\}$. Suppose Assumption \ref{Ass:K} with $\rk\geq \ell$ and consider a compact interval $I_{0}\subset I$. Set $\cons=L^22^{2\gamma}\norm{K}_1^2/(\norm{K}_2^2M)$.

i). If $\gamma\leq \ell-\beta$ define the bandwidth function $h$ on  $I_{0}$ by 
	\begin{align}
	\label{eq:hD1}
	h_0(x)=\begin{cases}
		\frac12 (\cons n)^{-\frac{1}{2\beta+1}}& \text{ if }\exists i\in\{1,\ldots, m\},  |x-x_{i}|\leq (\cons n)^{-\frac1{2\beta+1}},\\ 
		\frac12|x-x_{i}|& \text{ if } (\cons n)^{-\frac1{1+2\beta}}\leq |x-x_i|\leq (\cons n)^{-\frac1{2\ell-2\gamma+1}},\\
		\frac12(\cons n)^{-\frac1{2\ell+1}} |x-x_i|^{\frac{2\gamma}{2\ell+1}} & \text{ if } (\cons n)^{-\frac1{1+2\ell-2\gamma}} \leq |x-x_i|\leq c_0, \\
		\frac12(\cons n)^{-\frac{1}{2\ell+1} }&\text{ if } \forall i \in \{1,\ldots,m\}, |x-x_i|\geq c_0.\\
\end{cases} 
	\end{align}  
Then, for $p\ge 1$ and a positive constant $C$ ,
it holds that 
\[ 	 \E\left[\|\hat{f}_{h_0}-f\|_{p,I_{0}}^p\right] \leq C\left( n^{-\frac{p\beta+1}{2\beta+1}}+\frac{\log(n)}{n}\units{p=2}+n^{-\frac{p\ell}{2\ell+1}}\left(1+\log(n)\units{\gamma=\frac{2\ell+1}{p}}\right)\right).\]

ii).  If $\gamma>\ell-\beta$ define the bandwidth function $h$ on  $I_{0}$ by 
\begin{align}
	\label{eq:hD2}
	h_0(x)=\begin{cases}
		\frac12(\cons n)^{-\frac{1}{2\beta+1}}& \text{ if }\exists i\in\{1,\ldots, m\},  |x-x_{i}|\leq (\cons n)^{-\frac{(\ell-\beta)/\gamma}{2\beta+1}},\\ 
		\frac12(\cons n)^{-\frac1{2\ell+1}} |x-x_i|^{\frac{2\gamma}{2\ell+1}} & \text{ if } (\cons n)^{-\frac{(\ell-\beta)/\gamma}{2\beta+1}} \leq |x-x_i|\leq c_0 ,\\
		\frac12(\cons n)^{-\frac{1}{2\ell+1} }&\text{ if } \forall i \in \{1,\ldots,m\}, |x-x_i|\geq c_0.\\
	\end{cases} 
\end{align}  

		Then, for $p\ge 1$ and a positive constant $C$ ,
it holds that 
		\[  \E\left[\|\hat{f}_{h_0}-f\|_{p,I_{0}}^p\right] \le C\left(
			n^{-\frac{ p\beta+(\ell-\beta)/\gamma}{(2\beta+1)}}
			+n^{-\frac{p\ell}{2\ell+1}}\left(
				1+\log(n)\units{\gamma=\frac{2\ell+1}{p}}
		\right)		\right).		
	\] 
	
	\end{thm}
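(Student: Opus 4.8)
The plan is to follow closely the structure of the proof of Theorem~\ref{thm:UBPH}, the only genuinely new ingredient being the control of the bias in the regime where $f^{(\ell)}$ blows up near a singularity. As there, I would first reduce to a single irregularity point, say $x_1$, the general case following by cutting the integral over $I_0$ around each $x_i$ and using $c_0$ to guarantee that, for $|x-x_i|\le c_0$, the support $[x-h_0(x),x+h_0(x)]$ meets none of the other irregularities. The Rosenthal-based pointwise bound established in the proof of Theorem~\ref{thm:UBPH}, namely \eqref{eq:Rosprf1}, applies verbatim here, giving for every $x$
\[
\EE{|\hat f_{h_0(x)}(x)-f(x)|^p}\le c_p\left(v(h_0(x))^p+|f_{h_0(x)}(x)-f(x)|^p\right),
\]
with $v(h)=M^{1/2}\norm{K}_2/(nh)^{1/2}$, so that everything reduces to bounding the bias branch by branch and integrating.

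For the bias, the key step is the following. Whenever $h_0(x)\le \tfrac12|x-x_1|$, every point $y$ of the support $[x-h_0(x),x+h_0(x)]$ satisfies $|y-x_1|\ge \tfrac12|x-x_1|$, so $|f^{(\ell)}(y)|\le L\,2^{\gamma}|x-x_1|^{-\gamma}$. Expanding $f$ to order $\ell$ by Taylor--Lagrange and using that the moments of $K$ of orders $1,\dots,\ell-1$ vanish (here $\rk\ge\ell$), those terms cancel and one is left with
\[
|f_{h_0(x)}(x)-f(x)|\le \frac{h_0(x)^{\ell}}{\ell!}\norm{u^{\ell}K}_1\,L\,2^{\gamma}|x-x_1|^{-\gamma}\le \norm{K}_1 L\,2^{\gamma}h_0(x)^{\ell}|x-x_1|^{-\gamma}.
\]
Balancing this squared bias against $v(h_0(x))^2$ yields exactly $h_0(x)\propto n^{-1/(2\ell+1)}|x-x_1|^{2\gamma/(2\ell+1)}$ and fixes the constant $\cons=L^2 2^{2\gamma}\norm{K}_1^2/(\norm{K}_2^2M)$; this is the balanced branch of \eqref{eq:hD1}--\eqref{eq:hD2}. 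The factor $\tfrac12$ in $h_0$, together with the choice of inner endpoints, is what guarantees $h_0(x)\le\tfrac12|x-x_1|$ throughout the balanced branch, legitimising the bias bound. Very close to $x_1$ the balanced window would violate this constraint, and one switches to the global Hölder information: on $|x-x_1|\le(\cons n)^{-1/(2\beta+1)}$ the membership $f\in\Sigma(\beta,L,M,I)$ gives a bias $\lesssim h_0(x)^{\beta}$ with the $\beta$-adapted window, while in case (i) the intermediate branch takes the largest admissible window $h_0(x)=\tfrac12|x-x_1|$, for which the bias $\lesssim|x-x_1|^{\ell-\gamma}$ is small and the variance dominates. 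One checks that $h_0$ is continuous across all prescribed endpoints, so the branches glue.

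It then remains to integrate the pointwise bound over each branch. The $\beta$-branch contributes its variance $\lesssim n^{-p\beta/(2\beta+1)}$ times the length of the branch, giving $n^{-(p\beta+1)/(2\beta+1)}$ in case (i) and $n^{-(p\beta+(\ell-\beta)/\gamma)/(2\beta+1)}$ in case (ii); the two differ only through the length of the $\beta$-region, which is precisely what the two inner endpoints encode. The far branch $|x-x_1|\ge c_0$ uses a constant window and contributes $n^{-p\ell/(2\ell+1)}$ over the bounded $I_0$. The central computation is the balanced-branch integral
\[
\int n^{-\frac{p\ell}{2\ell+1}}|x-x_1|^{-\frac{p\gamma}{2\ell+1}}\,\rmd x,
\]
whose behaviour is governed by the sign of $1-p\gamma/(2\ell+1)$: when positive the integral is dominated by the outer endpoint $c_0$ and yields a clean $n^{-p\ell/(2\ell+1)}$; at the critical exponent $\gamma=(2\ell+1)/p$ it produces the factor $\log(n)\units{\gamma=(2\ell+1)/p}$; when negative it is dominated by the inner endpoint, and an elementary comparison shows that boundary term is absorbed by the two advertised rates. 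In case (i) the extra intermediate branch contributes $\int(n|x-x_1|)^{-p/2}\,\rmd x$ over $[(\cons n)^{-1/(1+2\beta)},(\cons n)^{-1/(2\ell-2\gamma+1)}]$; for $p=2$ this equals $\tfrac1n\int \rmd x/|x-x_1|\sim \log(n)/n$, which is exactly the term $\tfrac{\log n}{n}\units{p=2}$, while for $p\neq2$ it is dominated by one of its endpoints and again absorbed.

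The step I expect to be the main obstacle is the bookkeeping of this last integration: for each value of $p$ and $\gamma$ one must identify which endpoint dominates the balanced branch (and, in case (i), the intermediate branch), and verify in every subcase that the resulting boundary contributions do not exceed the two rates stated in the theorem. This is where the precise inner endpoints $(\cons n)^{-1/(2\ell-2\gamma+1)}$ and $(\cons n)^{-((\ell-\beta)/\gamma)/(2\beta+1)}$, and the continuity of $h_0$, are used; the dichotomy $\gamma\le\ell-\beta$ versus $\gamma>\ell-\beta$ is exactly the condition under which the intermediate branch is, or is not, present.
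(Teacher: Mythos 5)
Your plan coincides with the paper's proof: reduction to a single irregularity, the Rosenthal pointwise bound \eqref{eq:Rosprf1}, the Taylor--Lagrange bias bound $|f_{h}(x)-f(x)|\le 2^{\gamma}\norm{K}_1 L\min\{h^{\ell}|x-x_i|^{-\gamma},h^{\beta}\}$ obtained by enforcing $h_0(x)\le\tfrac12|x-x_i|$, the resulting balanced window and branch endpoints, and the branch-by-branch integration with the critical exponent $\gamma p=2\ell+1$ producing the logarithm and the inner-endpoint contributions absorbed into the stated rates. The remaining bookkeeping you flag is exactly the computation the paper carries out, so the approach is essentially identical and correct.
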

In Equations \eqref{eq:hD1} and \eqref{eq:hD2}, the definition of $c_{0}$ ensures that any point $x$ can be at distance smaller than $c_0$ of at most one irregularity point $x_i$, ensuring that both functions $h_0$ are well defined. 
Note that when the derivative is not too large, the rate of convergence is the same (up to a $\log(n)$ factor) as for bounded derivatives. Even when the derivatives are large, the rate of $\hat{f}_{h_{0}}$ improves the usual rate on $\Sigma(\beta,L,M,I)$.

The bandwidth $h_{0}$ is constructed in the same way as for piecewise Hölder spaces. If $x$ is close to one of the irregularities, we take into account  the minimum regularity $\beta$ (which may be 0). If $x$ is larger than $c_{0}$ and therefore outside the zone of influence of the irregularities, we take the usual bandwidth adapted to the $\ell$ regularity.  In the intermediate regime, we have to be more careful for the choice of the bandwidth. 
First, as the derivative is unbounded, we impose that $h_{0}(x)\leq \min_i|x-x_i|/2$ to ensure that $f^{(\ell)}$ can be bounded on the estimation interval $ [x-h_{0}(x),x+h_{0}(x)]$.
Secondly, we can remark that the bias can be bounded in two ways (see Equations \eqref{eq:BiasTayl}  and \eqref{eq:BIASup})
\begin{align}\label{eq:biasNB} |f_h(x)-f(x)|\lesssim \min\left\{ h^{\beta}, h^{\ell}|x-x_i|^{-\gamma}\right\}.
\end{align}
Depending on the respective values of $\gamma,\ \beta$ and $\ell$ this second bias term depending on $x$ induces additional regimes for the selection of the bandwidth $h_{0}$.

\begin{exe}\label{ex:3}\textbf{Bounded beta distributions}
Consider for $\alpha,\beta\geq 0$ the bounded density $f_{\alpha,\beta}(x)= c_{\alpha,\beta} x^{\alpha}(1-x)^{\beta}\mathbf{1}_{x\in[0,1]}$, where $c_{\alpha,\beta}=\Gamma(\alpha+1)\Gamma(\beta+1)/\Gamma(\alpha+\beta)$, which is $C^{\infty}$ on any interval not containing the points $\{0,1\}$. For any integer $\ell\ge\alpha\wedge\beta$ it holds that $f_{\alpha,\beta}^{(\ell)}(x)\leq C_{\alpha,\ell} x^{\alpha-\ell}+C_{\beta,\ell}(1-x)^{\beta-\ell}$ for $x\notin\{0,1\}$.
Select $\ell=\rk\ge\alpha\wedge\beta$, if the bandwidth $h>0$ is fixed, we have that 
	\[\int_{-\frac12}^{\frac12}|f_h(x)-f(x)|^pdx\lesssim \int_{|x|\leq 2h}h^{p\lfloor \alpha\rfloor}dx+\int_{2h\leq|x|\leq 1/2} h^{p\rk}|x|^{p(\alpha-\rk)}dx\lesssim h^{p\lfloor \alpha\rfloor +1}+h^{p\alpha+1} .\]
	In the same way, we can prove that $\int_{1/2}^{3/2}|f_h(x)-f(x)|^pdx\lesssim h^{p\lfloor \beta\rfloor +1}$. 
	Therefore, the bias term (at the power $p$) is bounded by $h^{p\lfloor \alpha\wedge\beta\rfloor +1}$. The rate of convergence is obtained for $h=n^{-1/(2(\lfloor \alpha\wedge\beta\rfloor +1+2/p))}$ and the the risk is bounded by $n^{-\frac{p\lfloor \alpha\wedge\beta\rfloor +1}{2\lfloor \alpha\wedge\beta\rfloor+1+2/p}}.
$
 Second, observe that $f_{\alpha,\beta}$ belongs to $D_{\pw}(\rk, \rk-(\alpha\wedge \beta),C_{\alpha,\beta}, c_{\alpha,\beta},\R)\cap \Sigma(\alpha\wedge \beta,C_{\alpha,\beta}, c_{\alpha,\beta},\R)$. Considering the variable bandwidth $h_{0}$ in \eqref{eq:hD1} and $I_{0}=[-2,2]$, Theorem  \ref{thm:UBDP} implies a $L_p$-risk of the estimator $\hat f_{h_{0}}$ in 
$n^{-\frac{p(\alpha\wedge\beta)+1}{2(\alpha\wedge \beta)+1}}+\frac{\log(n)}{n}\units{p=2}+n^{-\frac{p\rk}{2\rk+1}}$.
Comparing both rates, 
	we observe that the rate of  $\hat f_{h_{0}}$ is  systematically faster. 
\end{exe}

\begin{exe}\label{ex:4}Consider a case where $\ell-\beta <\gamma$, for example $f(x)=\frac14(1+\sin\big(\frac1x\big))\mathbf{1}_{[-2,2]}(x),$ which has a discontinuity point at 0. Set $I_{0}=[-1,1]$, it is straightforward to see that $|f^{(\ell)}(x)|\le C_{\ell}|x|^{-2\ell}$, $x\in I_{0}\setminus\{0\}$, for some constant $C_{\ell}$ and any $\ell$. Select $\ell=\rk$,
if the bandwidth $h$ is fixed, then the bias term is bounded by, for any $0\le \delta\leq 1$ 
	\[\int_{-1}^1|f_h(x)-f(x)|^pdx \lesssim \int_{|x|\leq h^{\delta}} 1dx+\int_{h^{\delta}\leq |x|\leq 1} |x^{-2\rk}h^{\rk}|^pdx\lesssim h^{\delta}+h^{-2\delta p\rk+\rk p+\delta}.\]
	This quantity is minimal when $\delta=1/2$, for which the bias is bounded by $\sqrt{h}$. Then the optimal window is proportional to $n^{-p/(1+p)}$ and the risk is bounded by $n^{-p/(2(1+p))}$. 
	The function $f$ belongs to $D_{\pw}(\rk, 2\rk,C_{\ell}, \frac14,]-2,2[)\cap \Sigma(0,1/2,1/2,]-2,2[)$ and that $\hat f_{h_{0}}$ with $h_{0}$ given by \eqref{eq:hD2}   provides a faster rate in $n^{-p\rk/(2\rk+1)}+n^{-1/2}$ for any any $p\ge 1$. 
\end{exe}

\begin{exe}\label{ex:5} \textbf{Unbounded densities}
Consider a density $f$ supported on $(0,\infty)$ and such that $f(x)\leq C (x^{-\eta}\vee 1)$ with $0<\eta<1/2$ and estimation interval $I_{0}=]0,A]$ for some $A>1$. Then $f$ is in  $L_2(I_0)$ and may tend to $\infty$ at 0. 
Assume that $f$ is in $C^{\ell}$ and that $|f^{(\ell)}(x)|\leq C x^{-\eta-\ell}$ for some integer $\ell>0$ and $x>0$. 
The variance of the estimateur $\hat{f}_h(x)$ is bounded by 
\[\var{\hat{f}_h(x)}\lesssim\min\left( \frac{1}{nh^2}, \frac{x^{-\eta}}{nh}\right),\quad \mbox{for any }h\ge 0,\ x\ge 0.\]
The squared bias term can be bounded as follows:
\[|f_{h}(x)-f(x)|^{2}\lesssim\begin{cases}
h^{2\ell}x^{-2(\eta+\ell)} & \text{ if } x\geq 2h,\\
x^{-2\eta} & \mbox{otherwise}.
\end{cases}\]
The first inequality is derived from the second majorant in \eqref{eq:biasNB} replacing $\eta$ with $\eta-\ell$, the second follows from the assumption on $f$ together with
$$f_{h}(x)=\int_{-\frac xh}^{1}f(x+uh)K(u)\rmd u\lesssim\int_{-\frac xh}^{0}(x+uh)^{-\eta}du+x^{-\eta}\lesssim
\frac{x^{1-\eta}}{h}+x^{-\eta},$$ and noting that for $h\ge x/2$ it holds $x^{1-\eta}/h\le 2x^{-\eta}$. The squared bias variance compromise leads to select $h$ as follows
$$h_{0}(x)=\begin{cases}
{\frac{x^{2\eta}}{n} }& \text{ if }0<x\le (2n)^{-\frac{1}{1-\eta}},\\
\left(\frac{x^{\eta+2\ell}}{n}\right)^{\frac{1}{2\ell+1}}& \text{ if } (2n)^{-\frac{1}{1-\eta}}\le x\le 1,\\
n^{-\frac{1}{2\ell+1}}& \text{ if }1\le x\le A.
\end{cases}$$
Then the $L_2$-risk of the estimator $\hat{f}_{h_{0}}$ is bounded by: 
\begin{align*}
	\E{\big[\|{\hat{f}_{h_0}-f}\|_{2}^2\big]}&\lesssim
	\int_0^{(2n)^{-\frac1{1-\eta}}} x^{-2\eta}dx+\int_{(2n)^{-\frac1{1-\eta}}}^{1}  \frac{x^{-\eta}}{nn^{-\frac1{2\ell+1}}x^{\frac{\eta+2\ell}{2\ell+1}}}dx+\int_1^{A}n^{-\frac{2\ell}{2\ell+1}}dx\\
& \lesssim {n^{-\frac{1-2\eta}{1-\eta}}}+{n^{-\frac{1-2\eta}{1-\eta}}}+n^{-\frac{2\ell}{2\ell+1}}. 
\end{align*}
{The estimator converges, possibly very slowly.}
We recover the classical rate when $\eta$ gets close to 0 (See Theorem \ref{thm:UBDP} with $\gamma=\ell,p=2$ and $\beta=0$).
\end{exe} 
	
	\section{Spatial adaptation to inhomogeneous smoothness\label{sec:adapt}}

	In this section, neither the regularity parameters nor the regularity change points are assumed to be known. We adapt the bandwidth selection procedure proposed in Lepski \textit{et al.} (1997) \cite{lepski1997optimal} in a context of density estimation. 
The bandwidth $h$ is chosen from the discrete set \[\mathcal{H}_n=\{ a^{-j}, 0\leq j\leq J\} \quad\text{with}\quad J=\lfloor \log_a(n/\log^3(n)) \rfloor \] 
for $1<a\le 2$. For example, if $a=2$,  the set becomes $\mathcal{H}_n=\{ 1,1/2,\ldots,1/2^J \}$.  The closer $a$ gets to 1 the larger the cardinal of $\mathcal{H}_{n}$ is. Note that the constraint on $J$ ensures that for all $h\in\mathcal H_{n}$ it holds that $nh\ge 1$, which allows to control the variance term. In fact, we impose a slightly stronger condition, $nh\geq \log^3(n)$.  

Following the procedure described in \cite{lepski1997optimal},  we choose the largest  bandwidth  in $\mathcal{H}_{n}$ for which the bias remains less than the variance, or a sharp upper bound of the latter. The variance of the estimator satisfies, for any $h>0$,
\[\var{\hat{f}_h(x)}\leq \frac{1}{nh}\int _{-1}^{1}f(x+uh)K^2(u)du \leq \frac{\| f\|_{\infty}\|K\|_{2}^{2}}{nh}=: v^2(h). \]
If no upper bound for $\| f\|_{\infty}$ is available,  the supremum of $f$ can be estimated, 
a strategy is numerically and theoretically studied in \cite{Lacour}. We also estimate $\norm{f}_{\infty}$ in the numerical Section \ref{sec:Num}. To estimate the bias term, we consider a smaller bandwidth $\eta<h$ and set  
$ B_{h,\eta}(x):=\hat{f}_h(x)-\hat{f}_{\eta}(x).$
It is easy to control the variance of $B_{h,\eta}$ as follows
\begin{align}\label{eq:v2etah}\var{B_{h,\eta}(x)}\leq \frac{\|f\|_{\infty}}{n}\|K_{h}-K_{\eta}\|_{2}^{2}= \frac{\| f\|_{\infty}}{nh}\int_{-1}^{1} \left( K(u)-\frac h{\eta}K\left(\frac{uh}{\eta}\right)\right)^2\rmd u=:v^2(h,\eta).\end{align}
For example, for a rectangular kernel $K(x)=2^{-1}\mathbf1_{[-1,1]}(x)$ one gets 
\[v^2(h,\eta)=\frac{\| f\|_{\infty}}{2nh}\left( \frac \eta h\left(1-\frac h\eta\right)^{2}+1-\frac{\eta}{h}\right)=\frac{\| f\|_{\infty}}{2n}\left( \frac 1\eta-\frac1h\right).\]More generally, it holds that 
\begin{align*}
v^{2}(h,\eta) &\le {2\|f\|_{\infty}\|K\|_{2}^{2}}\left(\frac1{nh}+\frac1{n\eta} \right)=2v^2(h)+2v^2(\eta).
\end{align*}

We select $\hat{h}(x)$ as  the largest bandwidth in $\mathcal H_{n}$ such that, for any $\eta<\hat{h}(x)$, the estimated bias $B_{\hat{h}(x),\eta}(x)$ is smaller than its standard deviation  $v(h,\eta)$ multiplied by a logarithmic factor
\begin{align}\label{eq:hadapt}\hat{h}_{n}(x)=\sup\{h\in\mathcal{H}_{n}: \ \  |\hat{f}_h(x)-\hat{f}_{\eta}(x)|\leq \psi(h,\eta),\ \forall \eta<h,\ \eta\in\mathcal{H}_n\}\end{align}
where $\psi(h,\eta)=2D_1v(h)\lambda(h)+v(h,\eta)\lambda(\eta)$
and $\lambda(h)=1\vee\sqrt{D_2\log(1/h)}$, with $D_1\geq 1$ and  $D_2>4p$. 
The aim is to compare the risk of the estimator $\hat{f}_{\hat{h}(x)}$ to the risk of an oracle estimator  $\hat{f}_{h_0(x)}$ where for any $x$, $h_0(x)$ minimizes the point-wise risk. The optimal bandwidth $h_0$ is difficult to handle because it does not belong to a finite set. Instead, we define a pseudo-oracle $h_n^*$ such that 
\begin{align}\label{eq:hor}h^{*}_{n}(x)= \sup \left\{ h\in\mathcal{H}_n, |f_{\eta}(x)-f(x)|\leq D_1v(h)\lambda(h),\ \forall \eta\leq h,\ \eta\in\mathcal{H}_{n}\right\}.\end{align}
The bias of $f_{h^{*}_{n}(x)}(x)$ is smaller than its variance (up to a logarithmic factor). This property is also satisfied by the estimator $\hat f_{h_{0}}$ for $h_{0}$ as in Theorems \ref{thm:UBPH} and \ref{thm:UBDP} (see \eqref{eq:lienbiaisvariance} and \eqref{eq:lien_biais_variance_Dw}).

\begin{thm}\label{thme:UBadapPoint}
	Consider a density function $f$ bounded by $M$ on $I$, and set $v^2(h)=M\norm{K}_2^2/(nh)$. 
	For $n$ large enough,   for all $x\in I_{0}$, it holds that 	\[\E[{|\hat{f}_{\hat h_{n}(x)}(x)-f(x)|^{p}}]\leq C (\log n)^{\frac p2}v^{p}(h^{*}_{n}(x)),\] where $C$ does not depends on $n$ and $h$.
\end{thm}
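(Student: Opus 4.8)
My plan is to follow the standard Lepski-type argument, splitting the error according to whether the data-driven bandwidth $\hat h_n(x)$ over- or under-shoots the pseudo-oracle $h^*_n(x)$. First I would fix $x\in I_0$, abbreviate $\hat h=\hat h_n(x)$, $h^*=h^*_n(x)$, and decompose
\[
\hat f_{\hat h}(x)-f(x)
=\big(\hat f_{\hat h}(x)-\hat f_{\hat h\wedge h^*}(x)\big)
+\big(\hat f_{\hat h\wedge h^*}(x)-f(x)\big).
\]
On the event $\{\hat h\le h^*\}$ the first bracket vanishes, and on $\{\hat h> h^*\}$ the selection rule \eqref{eq:hadapt} directly controls the first bracket by $\psi(\hat h,h^*)\le \psi(\hat h,h^*)$, which is of the order $v(h^*)\lambda(h^*)$ up to the logarithmic factor once one uses $v(h,\eta)\le\sqrt2(v(h)+v(\eta))$ and $\lambda$ monotone. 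The second bracket, $\hat f_{\hat h\wedge h^*}(x)-f(x)$, always uses a bandwidth no larger than $h^*$, so by the defining property \eqref{eq:hor} of the pseudo-oracle its bias is at most $D_1 v(h^*)\lambda(h^*)$; its stochastic part is a centered sum to which I would apply a Bernstein/Rosenthal concentration bound at scale $v(h^*)\lambda(h^*)$.

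The crux is therefore to show that the ``bad'' event $\{\hat h> h^*\}$ together with a large stochastic fluctuation contributes negligibly. I would define, for each pair $h>\eta$ in $\mathcal H_n$, the centered quantities
\[
\xi_h(x)=\hat f_h(x)-f_h(x),\qquad
\xi_{h,\eta}(x)=B_{h,\eta}(x)-\big(f_h(x)-f_\eta(x)\big),
\]
and introduce the event $\Omega$ on which simultaneously $|\xi_h(x)|\le D_1 v(h)\lambda(h)$ for all $h\in\mathcal H_n$ and $|\xi_{h,\eta}(x)|\le v(h,\eta)\lambda(\eta)$ for all $\eta<h$. The key deterministic claim is that on $\Omega$ one has $\hat h\ge h^*$ and the selection test is satisfied with room to spare, so that on $\Omega$ the whole error is bounded by $C\,v(h^*)\lambda(h^*)$; this is where the constants $D_1\ge 1$ and the form of $\psi$ are used, and where the inequality \eqref{eq:lienbiaisvariance} relating bias to variance for the oracle is invoked. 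On $\Omega$ the risk bound is immediate after taking $p$-th powers and using $\lambda(h^*)^p\le C(\log n)^{p/2}$.

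It remains to handle the complement $\Omega^c$. Here I would use a deviation inequality for the kernel statistics: each $\xi_h(x)$ is an average of i.i.d. bounded, mean-zero variables with variance at most $v^2(h)$ and sup-norm $O\big(\|K\|_\infty/(nh)\big)$, so Bernstein's inequality gives $\PP\big(|\xi_h(x)|> D_1 v(h)\lambda(h)\big)\lesssim \exp(-cD_2\log(1/h))= h^{cD_2}$, and similarly for the increments $\xi_{h,\eta}$ using \eqref{eq:v2etah}. Summing over the at most $J^2=O(\log^2 n)$ pairs in $\mathcal H_n$ and using $D_2>4p$ to make the exponent strong enough, I get $\PP(\Omega^c)\le C n^{-2p}$ (the bandwidths satisfy $h\ge \log^3(n)/n$, so $h^{cD_2}$ is a negative power of $n$). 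Finally, on $\Omega^c$ I crudely bound $|\hat f_{\hat h}(x)-f(x)|^p$ by its worst case — of order $(nh)^{-p}\vee M^p\le (n/\log^3 n)^{p}$ using $nh\ge\log^3 n$ — times the tiny probability $\PP(\Omega^c)$, so this term is $o(v^p(h^*))$.

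The main obstacle I anticipate is the deterministic comparison step: proving rigorously that on the good event $\Omega$ the data-driven choice satisfies $\hat h\ge h^*$ \emph{and} that the selected $\hat f_{\hat h}(x)$ stays within $C v(h^*)\lambda(h^*)$ of $f(x)$ even when $\hat h$ slightly exceeds $h^*$. This requires carefully chaining the bias control \eqref{eq:hor}, the variance-increment bound $v(h,\eta)\le\sqrt2(v(h)+v(\eta))$, and the monotonicity of $v$ and $\lambda$, so that the test threshold $\psi(\hat h,\eta)$ absorbs both the true bias and the fluctuations without losing more than a constant factor and a single power of $\lambda$.
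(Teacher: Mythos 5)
Your overall architecture (split on whether $\hat h_n(x)$ over- or under-shoots $h^*_n(x)$; on overshoot use the selection rule plus $v(h,\eta)\le 2v(h)$ for $\eta<h$; control the bias via \eqref{eq:hor} and the stochastic term via Rosenthal) matches the paper's treatment of the event $\{\hat h_n\ge h^*_n\}$, and your deterministic claim that the test is passed at $h^*_n$ whenever all increments $\xi_{h^*_n,\eta}$ are within $v(h^*_n,\eta)\lambda(\eta)$ is correct. (As an aside, you once call $\{\hat h>h^*\}$ the bad event; from context you mean $\{\hat h<h^*\}$.)

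The genuine gap is in your treatment of $\Omega^c$. With the paper's threshold $\lambda(\eta)=1\vee\sqrt{D_2\log(1/\eta)}$, Bernstein gives $\PP\bigl(|\xi_{h,\eta}|>v(h,\eta)\lambda(\eta)\bigr)\lesssim \eta^{cD_2}$, which is a power of the \emph{bandwidth}, not of $n$: for $\eta$ of constant order this probability is a constant, and even for $h^*_n=n^{-\epsilon}$ with small $\epsilon$ the union bound only yields $\PP(\Omega^c)\lesssim (h^*_n)^{cD_2}=n^{-\epsilon cD_2}$, which is not $O(n^{-2p})$ and in general cannot beat the worst-case error $|\hat f_{\hat h}(x)-f(x)|^p$ in a crude product bound (one would need $D_2$ to grow like $p/\epsilon$, whereas only $D_2>4p$ is assumed). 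The paper closes this by \emph{not} using a global good event: it writes $\{\hat h_n<h^*_n\}\subset\bigcup_{h\le h^*_n}\bigcup_{\eta<h}A_n(x,h,\eta)$, applies Cauchy--Schwarz term by term to get $\E[|\hat f_h(x)-f(x)|^p\units{A_n(x,h,\eta)}]\lesssim v^p(h)\lambda^p(h)\sqrt{\PP(A_n(x,h,\eta))}$, and then exploits that the error scale $v^p(h)=(nh)^{-p/2}$ and the probability $\eta^{D_2/4}$ are coupled through the same bandwidth: summing $\eta^{D_2/8}$ over $\eta<h$ gives $h^{D_2/8}$, and $D_2>4p$ makes the exponent $D_2/8-p/2$ positive, so $\sum_h \lambda^p(h)h^{D_2/8-p/2}$ is dominated by $h\asymp h^*_n$ and the whole contribution is $O\bigl((\log n)^{p/2}v^p(h^*_n)\bigr)$. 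Near $h\asymp h^*_n$ the probability is not small at all, but there the error is already of the target order; your argument has no mechanism for this regime. You would need either this peeling/Cauchy--Schwarz step, or to change the selector to use $\lambda=\sqrt{D_2\log n}$ throughout (which is not the procedure defined in \eqref{eq:hadapt}).
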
  

From this result we derive the following Corollary on the class of regularities considered above. Theorem \ref{thme:UBadapPoint} can be used to derive adaptive rates on regularity classes different from the ones introduced here.

\begin{cor}[Rates of convergence]
	\label{cor}
If $f$ belongs to $\Sigma_{\pw}(\alpha,L, M, I)\cap \Sigma(\beta,L, M,I)$, then, for any $p\geq 1$,  if $\rk\geq \alpha$ it holds that
\[\E\big[{\|{\hat{f}_{\hat{h}_n}-f}\|_{p,I_{0}}^p\big]}\leq C (\log n)^{\frac p2}\left(n^{-\frac{1+p\beta}{1+2\beta}}+n^{- \frac{ p\alpha}{2\alpha+1}}\right).\]
If $f$ belongs to $D_{\pw}(\ell,\gamma, L, M, I)\cap\Sigma(\beta,L,M,I)$, with $\rk\geq \ell$, then 
if $\gamma\leq \ell-\beta$, for any $p\geq 1$, then the rate of convergence is \[\E\big[{\|{\hat{f}_{\hat{h}_n}-f}\|_{p,I_{0}}^p\big]}\leq C  (\log n)^{\frac p2}\left(n^{-\frac{1+p\beta}{1+2\beta}}+\log(n)n^{-p \frac{\ell}{2\ell+1}}\right),\]
and if $\gamma>\ell-\beta$, also for $p\geq 1$ 
\[\E\big[{\|{\hat{f}_{\hat{h}_n}-f}\|_{p,I_{0}}^p\big]}\leq C  (\log n)^{\frac p2+1}\left(n^{-\gamma\left(\frac{p\beta}{1+2\beta}+\frac{(\ell-\beta)/\gamma}{2\beta+1}\right)}+n^{-p \frac{\ell}{2\ell+1}}\right).\]
\end{cor}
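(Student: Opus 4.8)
The plan is to integrate the pointwise oracle inequality of Theorem~\ref{thme:UBadapPoint} over $I_0$ and then reduce the resulting integral of $v^p(h_n^*(x))$ to the variance computations already performed in the proofs of Theorems~\ref{thm:UBPH} and~\ref{thm:UBDP}. Since the integrand is non-negative, Tonelli's theorem gives
\[\E\big[\|\hat f_{\hat h_n}-f\|_{p,I_0}^p\big]=\int_{I_0}\E\big[|\hat f_{\hat h_n(x)}(x)-f(x)|^p\big]\rmd x\le C(\log n)^{p/2}\int_{I_0}v^p(h_n^*(x))\rmd x,\]
so that everything reduces to controlling $v(h_n^*(x))$, with $h_n^*$ the pseudo-oracle of \eqref{eq:hor}, by $v(h_0(x))$ for the explicit bandwidth $h_0$ of \eqref{eq:h0}, \eqref{eq:hD1} or \eqref{eq:hD2}.

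The core of the argument is the inequality $v(h_n^*(x))\lesssim v(h_0(x))$, obtained by showing that $h_0(x)$, after rounding to the grid, is an admissible competitor in the supremum defining $h_n^*(x)$. Let $\underline h_0(x)$ be the largest element of $\mathcal H_n$ not exceeding $h_0(x)$; as $\mathcal H_n$ is geometric with ratio $a\in(1,2]$, one has $\underline h_0(x)\ge h_0(x)/a$ and hence $v(\underline h_0(x))\le\sqrt a\,v(h_0(x))$. Admissibility rests on two facts from the earlier proofs: the bias bounds are non-decreasing in the bandwidth (of the form $h^\alpha$, $h^\beta$, or $\min\{h^\beta,h^\ell|x-x_i|^{-\gamma}\}$, see \eqref{eq:BiasPH1} and \eqref{eq:biasNB}); and $h_0$ is built so that its bias is dominated by its standard deviation, $|f_{h_0(x)}(x)-f(x)|\le v(h_0(x))$, which is precisely \eqref{eq:lienbiaisvariance} (resp.\ \eqref{eq:lien_biais_variance_Dw}). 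Consequently, for every $\eta\le\underline h_0(x)\le h_0(x)$ the bias bound at $\eta$ is at most the bias bound at $h_0(x)$, so that
\[|f_\eta(x)-f(x)|\le v(h_0(x))\le v(\underline h_0(x))\le D_1\,v(\underline h_0(x))\,\lambda(\underline h_0(x)),\]
using $D_1\ge 1$, $\lambda\ge 1$ and the monotonicity of $v$. This is exactly the constraint in \eqref{eq:hor}, hence $h_n^*(x)\ge\underline h_0(x)$ and $v(h_n^*(x))\le\sqrt a\,v(h_0(x))$.

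It then remains to evaluate $\int_{I_0}v^p(h_0(x))\rmd x$. With $v^p(h)=(M\|K\|_2^2)^{p/2}(nh)^{-p/2}$ and $h_0$ given by \eqref{eq:h0}, \eqref{eq:hD1} or \eqref{eq:hD2}, this is nothing but the variance part of the integrals already computed in the proofs of Theorems~\ref{thm:UBPH} and~\ref{thm:UBDP}, split into the same regimes around each $x_i$. These computations return the corresponding oracle rates (including the $\log(n)$ factor at the critical value $\gamma=(2\ell+1)/p$ in the $D_{\pw}$ case); multiplying by the global prefactor $(\log n)^{p/2}$ yields the announced bounds. The choice of $\cons$ is what guarantees \eqref{eq:lienbiaisvariance} with constant one, keeping the comparison above clean.

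The main obstacle is precisely this comparison step, through the universally quantified condition ``$\forall\eta\le h$'' in \eqref{eq:hor}: \eqref{eq:lienbiaisvariance} only controls the bias at $h_0(x)$ itself, so one genuinely needs the monotonicity of the bias bounds to transfer that control to all smaller $\eta$. A second delicate point is the grid boundary when $\beta=0$: there the innermost oracle bandwidth $(\cons n)^{-1}$ lies below the smallest admissible bandwidth $a^{-J}$, which is of order $\log^3(n)/n$, so $\underline h_0(x)$ may fail to exist. On that short interval I would instead use only $h_n^*(x)\ge a^{-J}$, whence $v^p(h_n^*(x))\le C(\log n)^{-3p/2}$; integrated over an interval of length of order $(\cons n)^{-1}$ this contributes at most $C(\log n)^{-p}n^{-1}$, which is dominated by $n^{-1}$, the value of the stated rate at $\beta=0$. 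Passing from a single irregularity to $m$ of them is routine, by cutting $I_0$ around each $x_i$ as in the proofs of Theorems~\ref{thm:UBPH} and~\ref{thm:UBDP}.
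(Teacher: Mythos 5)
Your proposal follows essentially the same route as the paper's proof: apply Theorem \ref{thme:UBadapPoint} pointwise and integrate, round $h_0(x)$ down to the grid to get $h_{0,n}(x)\ge a^{-1}h_0(x)$, use \eqref{eq:lienbiaisvariance}/\eqref{eq:lien_biais_variance_Dw} together with the monotonicity of the bias bounds to show $h_{0,n}(x)\le h_n^*(x)$, and then reduce $\int_{I_0}v^p(h_n^*(x))\,\rmd x$ to the variance integrals already computed in Theorems \ref{thm:UBPH} and \ref{thm:UBDP}. You are in fact somewhat more careful than the paper on two points it leaves implicit — the need for monotonicity of the bias bound to verify the ``$\forall\eta\le h$'' constraint in \eqref{eq:hor}, and the edge case where $h_0(x)$ falls below the smallest grid bandwidth near an irregularity with $\beta=0$ — but these are refinements of the same argument rather than a different approach.
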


\section{Numerical results\label{sec:Num}}
We illustrate the previous adaptive method for estimating densities of variable regularity. The program is run on the \texttt{R} software. 
We work with the kernel $K(x)=(\frac98-\frac{15}8 x^2)\mathbf 1_{[-1,1]}(x)$, which has order 4.
The constants $D_1$ and $D_2$ are fixed after a preliminary study at $D_1=1$ and $D_2=0.4$. The estimate of the infinite norm of $f$ in the adaptive procedure is obtained by considering the maximum value of a kernel estimator of $f$ with an intermediate fixed window $1/\sqrt{n}$ (see also \cite{Lacour}) and the result is then multiplied by $1+1/\log n$.
The examples considered are inspired by the examples studied in \cite{DGL}:
\begin{enumerate}[label=(\roman*)]

\item Standard Gaussian distribution with density $f(x)=\frac{1}{\sqrt{2\pi}}e^{-x^2/2}$,
\item
 Laplace   distribution with density $f(x)=\frac{1}{2}e^{-|x|/2}$,
\item Exponential distribution with density $f(x)=e^{-|x|}$,
\item
Uniform distribution with density $f(x)=\units{[-1,1]}(x)$,
\item
Beta distribution with parameters $(1;1.1)$ with density  $f(x)=\frac{10}{11}(1-x)^{0.1}\units{[0,1]}(x)$,
\item
Mixture of Gaussian densities where {\small $$f(x)=\frac{1}{\sqrt{2\pi}}\left( \tfrac12e^{-\frac{x^2}2}+\tfrac{1}{10}e^{-50(x+1)^2}+\tfrac{1}{10}e^{-50(x+\frac12)^2}+\tfrac{1}{10}e^{-50x^2}+\tfrac{1}{10}e^{-50(x-\frac12)^2}+\tfrac{1}{10}e^{-50(x+1)^2}\right),$$} 
\item
Mixture of exponential functions with density  $f(x)=\frac{1}{2}e^{x}\units{x<0}+5e^{-10x}\units{x>0}$,
\item  Truncated Gaussian density $f(x)=\frac{1}{\sqrt{2\pi}} e^{-\frac{x^2}2}\units{x<0}+\frac{1}{2\sqrt{2\pi}} e^{-\frac{x^2}4}\units{x>0}.$
\end{enumerate}
Examples (i) and (vi) are $C^{\infty}$ on $\R$, (iii), (iv), (vii) and (viii) have discontinuity points and are $C^{\infty}$ elsewhere, (ii) is continuous but not differentiable at 0, it is $C^{\infty}$ elsewhere and (v) combine a discontinuity point 0 and a point where it is continuous but non differentiable 1.

In Figure \ref{Fig} we display some graphs of the estimated density with the adaptive procedure $\hat f_{\hat h}$ and the associated estimated bandwidth $\hat h$. Interestingly the estimated bandwidth matches the theoretical shape derived in Theorems \ref{thm:UBPH} and \ref{thm:UBDP}. We compare our estimator to a standard kernel estimator with constant bandwidth selected following the naive Scott rule as $1,06 \hat \sigma_{X}n^{-1/5}$, where $\hat\sigma_{X}$ denotes the square root of the empirical variance of $X$, and computed thanks to the \verb'R' function \verb'density'. This estimator, that we denote by $\hat f_{n^{-1/5}}$ in the sequel, has the advantage to be easily computed and  performs very well in practice. We observe that around the irregularity points our estimator better captures the irregularities. Outside the neighborhood of the irregularities both estimators present similar behaviors.

\begin{figure}
\begin{center}
\begin{tabular}{cc}
(ii) & (iv)\\
\includegraphics[scale=0.35]{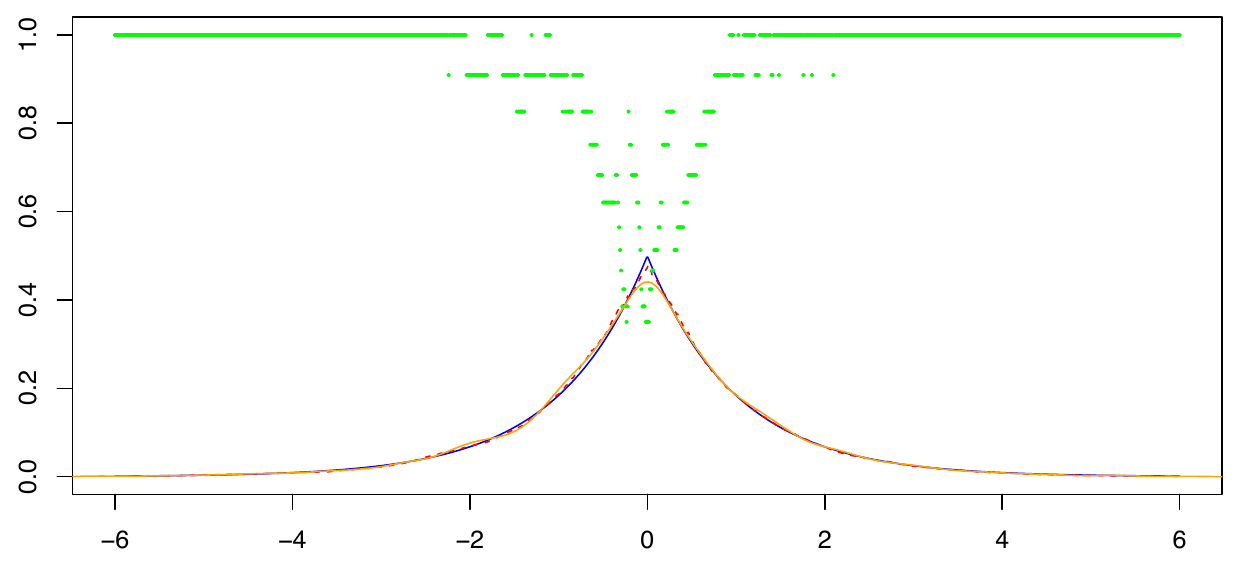}&\includegraphics[scale=0.35]{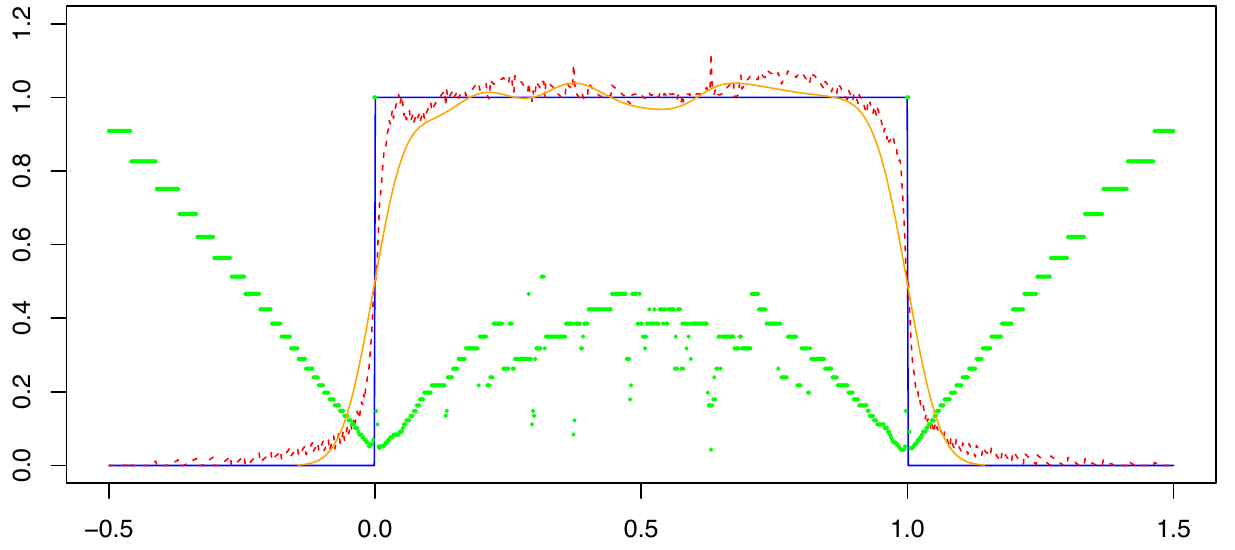}\\

(v) & (vi)\\
\includegraphics[scale=0.35]{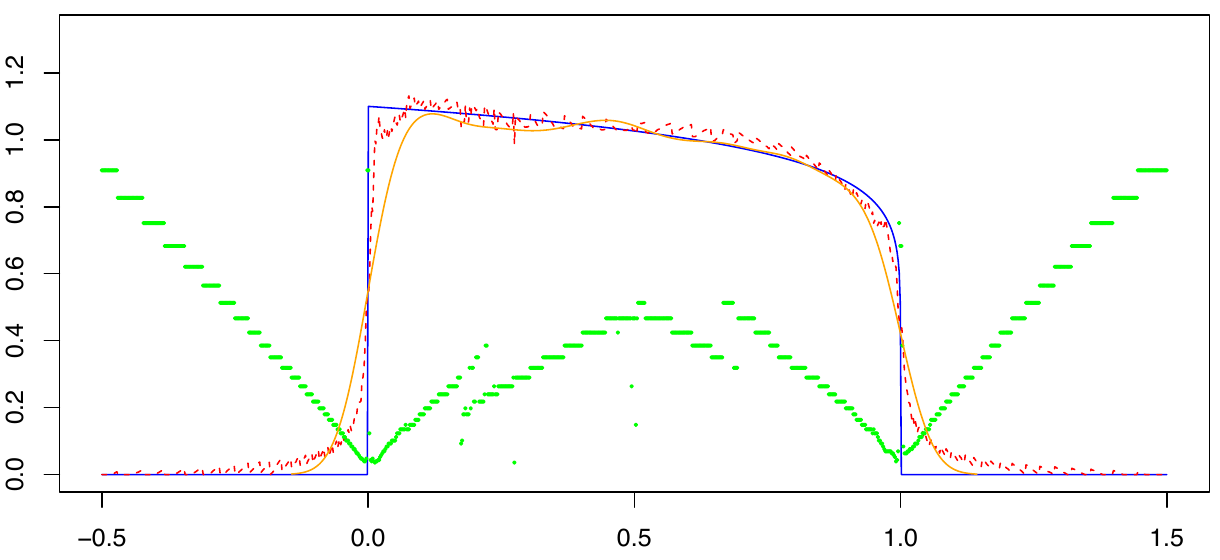}&\includegraphics[scale=0.35]{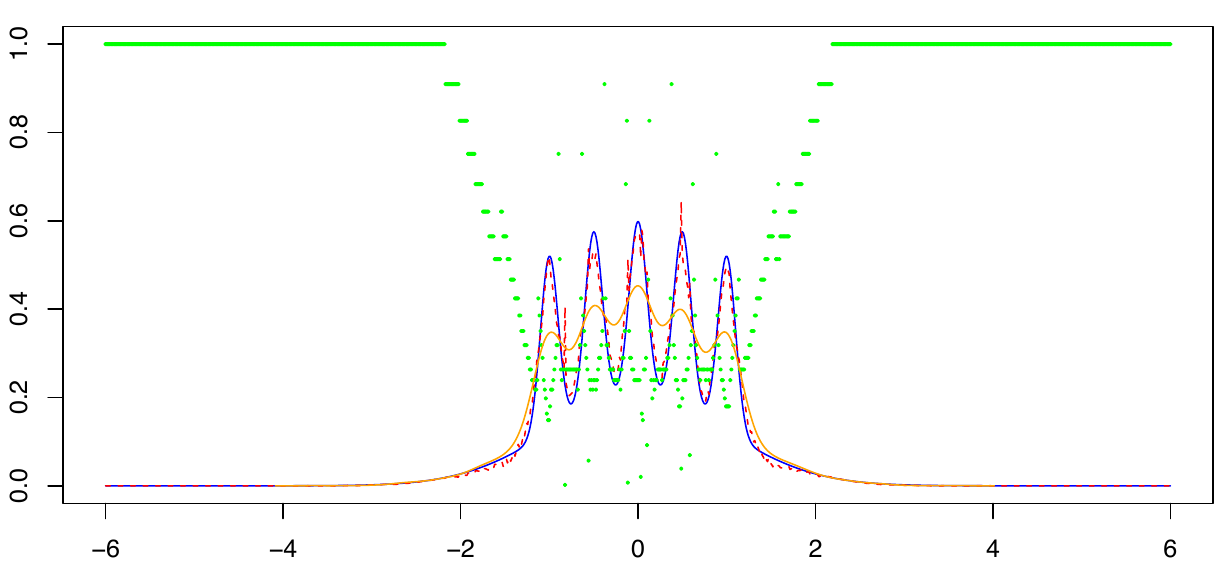}\\
(vii) & (viii)\\
\includegraphics[scale=0.35]{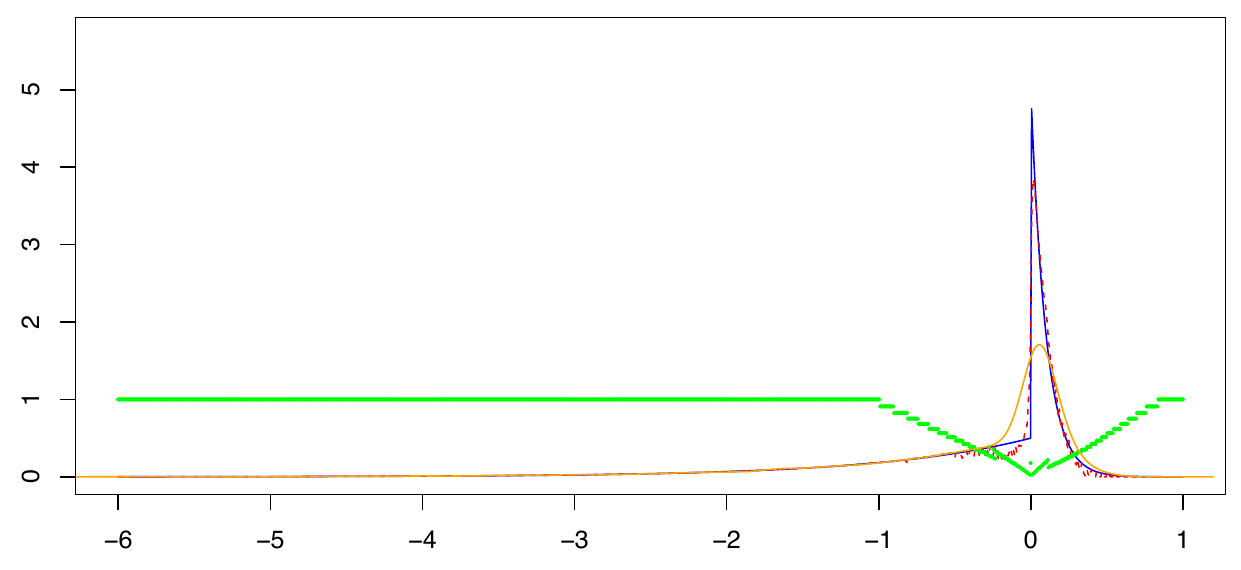}&\includegraphics[scale=0.35]{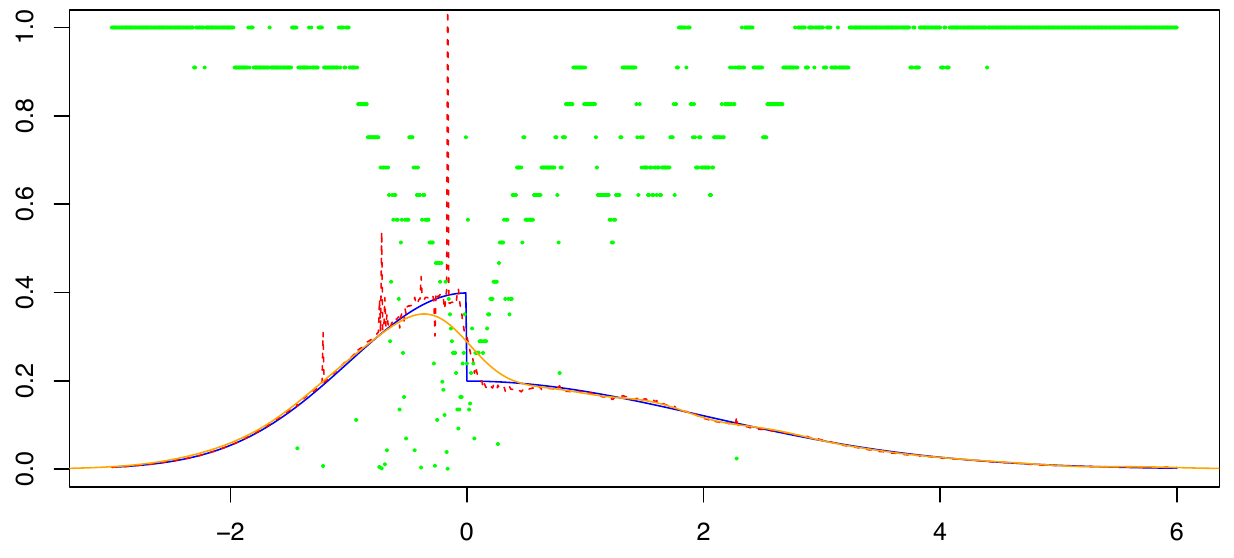}\\
\end{tabular}

\end{center}

\caption{{\footnotesize \label{Fig} For $n=10^4$ observations, illustration of the true density (in blue), our adaptive estimator (in red) and the associated bandwidth (in green) and the fixed bandwidth estimator (in orange). }}
\end{figure}

Consider for an estimator $\hat f$ the normalized $L_{2}$-risks, that allows the numerical comparaison of the different examples, 
$$\frac{ \E[\int_{I_{0}}(\hat f-f)^{2}]}{\int_{I_{0}}f^{2}},$$ where $I_{0}$ denotes the estimation interval. We  evaluate this risk empirically by a Monte Carlo procedure as well as  the associated  deviations (in parenthesis), from 200 independent datasets with different values of sample size. The results are displayed in the following Table \ref{Tab}.

\begin{table}[h!]\begin{center}
\begin{tabular}{cc}

(i), $I_{0}=[-2,2]$ & (ii), $I_{0}=[-2,2]$\\

\begin{tabular}{|c|cc|cc|}
	\hline
	$n$&\multicolumn{2}{c|}{Risk of $\hat{f}_{\hat{h}}$}&\multicolumn{2}{c|}{Risk of $\hat{f}_{n^{-1/5}}$}\\
	\hline
	500&0.0058&{\small (0.0036)}&0.0056&{\small (0.0034)}\\
	1000&0.0027&{\small (0.0021)}&0.0032&{\small (0.0022)}\\
	2000&0.0015&{\small (0.00088)}&0.0020&{\small (0.0010)}\\
	5000&0.00061&{\small (0.00039)}&0.00099&{\small (0.00051)}\\
	10000&0.00036&{\small (0.00024)}&0.00062&{\small (0.00030)}\\
	\hline
\end{tabular}
&\begin{tabular}{|cc|cc|}
	\hline
	\multicolumn{2}{|c|}{Risk of $\hat{f}_{\hat{h}}$}&\multicolumn{2}{c|}{Risk of $\hat{f}_{n^{-1/5}}$}\\

	\hline
0.0101& {\small (0.0046)}& 0.0103& {\small (0.0062)}\\
 0.0075& {\small (0.0029)}& 0.0082& {\small (0.0043)}\\
 0.0056& {\small (0.0019)}& 0.0058&  {\small (0.0025)}\\
 0.0037& {\small (0.0013)}& 0.0035& {\small (0.0013)}\\
 0.0025& {\small (0.00087)}& 0.0022& {\small (0.00080)}\\
	\hline
\end{tabular}\\
&\\

(iii), $I_{0}=[-1,1]$ & (v), $I_{0}=[-1/2,3/2]$\\
\begin{tabular}{|c|cc|cc|}
	\hline
	$n$&\multicolumn{2}{c|}{Risk of $\hat{f}_{\hat{h}}$}&\multicolumn{2}{c|}{Risk of $\hat{f}_{n^{-1/5}}$}\\
\hline
500 &  0.13& {\small (0.0090)}& 0.13&{\small (0.012)}\\
1000 & 0.095& {\small (0.0070)}& 0.12& {\small (0.0077)}\\
2000 & 0.068& {\small (0.0056)}& 0.10& {\small (0.0051)}\\
5000 & 0.039& {\small (0.0028)}& 0.085& {\small (0.0031)}\\
10000& 0.025&{\small (0.0017)}& 0.074&{\small (0.0017)}\\
	\hline
\end{tabular}
&
\begin{tabular}{|cc|cc|}
	\hline
\multicolumn{2}{|c|}{Risk of $\hat{f}_{\hat{h}}$}&\multicolumn{2}{c|}{Risk of $\hat{f}_{n^{-1/5}}$}\\

	\hline
	0.099&{\small (0.0036)}&0.040&{\small (0.0035)}\\
	0.073&{\small (0.0032)}&0.033&{\small (0.0023)}\\
	0.051&{\small (0.0025)}&0.028&{\small (0.0014)}\\
	0.030&{\small (0.0013)}&0.022&{\small (0.00080)}\\
	0.018&{\small (0.00078)}&0.019&{\small (0.00050)}\\
	\hline
\end{tabular}

\\

&\\
\multicolumn{2}{c}{ (vi), $I_{0}=[-2,2]$ }\\

\multicolumn{2}{c}{ \begin{tabular}{|c|cc|cc|}
	\hline
	$n$&\multicolumn{2}{c|}{Risk of $\hat{f}_{\hat{h}}$}&\multicolumn{2}{c|}{Risk of $\hat{f}_{n^{-1/5}}$}\\

	\hline
	500  & 0.12&{\small (0.0047)}& 0.12&{\small (0.0037)}\\
	1000 & 0.12&{\small (0.0039)}& 0.12&{\small (0.0029)}\\
	2000  &0.078& {\small (0.0077)}& 0.11&{\small (0.0020)}\\
	5000  &0.040& {\small (0.0042)}& 0.090&{\small (0.0017)}\\
	10000 &0.024&{\small (0.0025)}& 0.074& {\small (0.0015)}\\
	\hline
	\end{tabular}}

\end{tabular}\caption{{\footnotesize\label{Tab}Empirically estimated normalized $L_{2}$ risks (associated deviation) for the different examples and the estimators $\hat f_{\hat h}$ and $\hat f_{n^{-1/5}}$. }}
\end{center}
\end{table}

Comparing the risks in Table \ref{Tab}, the results of our estimator and the one with a fixed window remain very close, although our estimator gives better results overall. In the Gaussian case (i) (but also the mixture of Gaussian (vi)) where the density is $C^{\infty}$ the rate predicted by the theory for $\hat f_{\hat h}$ is in $n^{-8/9}$, as the kernel is of order 4, whereas that of $\hat f_{n^{-1/5}}$ is in $n^{-4/5}$ since this window is adapted to a density of regularity 2, when $n=10^{4}$ the ratio of these two rates is of the order of 2 which corresponds well to what is observed here. For the other examples, the theory predicts a larger gap between the performances of the estimators than what is measured here. This may be due to the fact that the fixed-window estimator applied to densities with isolated irregularities, is still well adapted over most of the estimation interval making it robust to isolated irregularities. Indeed, the modifications in the bandwidth suggested in \eqref{eq:h0}, \eqref{eq:hD1} or \eqref{eq:hD2} are local and affect intervals of vanishing length.

\section{Proofs \label{sec:prf}}

\subsection{Proof of Theorem \ref{thm:LB}}

	To establish the result we propose two different constructions: one on the space $\Sigma(\alpha,\ell,M,I)\subset  \Sigma_{\pw}(\alpha,L,M,I)\cap \Sigma(\beta,L,M,I)\subset H_{\alpha,\beta}(\eps)$ ensuring the optimality of the rate $n^{-p\alpha/(2\alpha+1)}$ and another on the space $\Sigma(\beta,L,M,I)\cap \Sigma(\alpha,L,M,I\setminus[-\ep,\ep]) \subset H_{\alpha,\beta}(\eps)$ for some small $\eps>0$ attaining the rate $n^{-(p\beta+1)/(2\beta+1)}.$

\paragraph{Lower bound on $\Sigma(\alpha,L,M,\R)$}
Following Theorem 2.7 in Tsybakov (2009) \cite{Tsyb}, the lower bound is established by considering a decision problem between  an increasing number $N+1$ of competing  densities $f_0, \ldots, f_{{N}}$ contained in $  \Sigma(\alpha,L,M,I)$ for some  interval $I$ and positive constants  $0<\alpha$, $L$ and $M$. The following two conditions must be  satisfied
\begin{itemize}
\item[(i)]   $\| f_{j}- f_{k}\|^p_{p,I_{0}} \geq d n^{-\frac{p\alpha}{2\alpha+1} },\   \forall \, 0\leq j<k\leq N$    for some positive constant $d$,
\item[(ii)]  $\frac{1}{N} \sum_{j=1}^{N}\limits  \chi^{2}(\PP^{\otimes n}_{j}, \PP^{\otimes n}_{0} )  \leq \kappa   N,$  for some $0<\kappa<1/8$,
\end{itemize} where
$\PP_{j}$ denotes the probability measure corresponding to the density $f_{j}$  and 
$\chi^{2}$ is the $\chi$-square divergence. 

Consider $I_{0}=[-1,1]$ and  the density $f_{0}(x)=\frac{1}{\sqrt{2\pi\sigma^2}}e^{-x^2/(2\sigma^2)}$ which belongs to the class $ \Sigma(\alpha,1/\sigma^{\alpha+1}L,\sigma^{-1}/\sqrt{2\pi},\R)$ for any  $\alpha$ where $L:=L_{\alpha}=\sup_{\R} |\varphi^{(\lfloor\alpha\rfloor+1)}(x)|$ where $\varphi$ is the density of the $\rond{N}(0,1)$. 
Note that, for any $(\alpha,L,M)$, there exists $\sigma>0$ such that $f_0$   belongs to $\Sigma(\alpha,L/2,M/2,\R)$.

Let $Q\in\N$ and $\delta,\ \gamma>0$. Let us consider the functions \begin{align*}
f_{{\bm \theta}}(x)&= f_{0}(x)+\delta Q^{-\gamma}\sum_{q=0}^{Q-1}\theta_{q}\psi\left(xQ-q\right),
\end{align*} where ${\bm \theta}\in \{0,1\}^{Q}$, $\psi$ is supported on $[0,1]$ and is such that $\int_{0}^{1}\psi(x)\rmd x=0$ and $\psi$  admits derivatives up to order $\lfloor \alpha\rfloor$ such that $\psi^{(j)}(0)=\psi^{(j)}(1)=0$ for $0\le j\le \lfloor \alpha\rfloor$ and such that $|\psi^{(\lfloor \alpha\rfloor)}(x)-\psi^{(\lfloor \alpha\rfloor)}(x')|\le (L/4)|x-x'|^{\alpha-\lfloor \alpha\rfloor}$ for all $x,x'\in[0,1].$ Note that for all $q$, $\psi\left(.Q-q\right)$ is supported on $\left[\frac{q}{Q},\frac{q+1}{Q}\right]$. 
First we show that $f_{\bm {\theta}}$ defined as above is a density and belongs to the desired class. 
\begin{lemma}\label{lem:BIdens}
Let $\alpha\ge 1$ and ${\bm \theta}\in \{0,1\}^{Q}$. Then $f_{0}$   and $f_{{\bm \theta}}$ are densities on $\R$ and belong to
$\Sigma(\alpha,L,M,\R)$  provided that $Q \ge1$, $\gamma\ge \alpha$ and  \begin{align}\label{eq:delta}\delta\le\min\left( \frac{e^{-1/(2\sigma^2)}}{\norm{\psi}_{\infty}\sqrt{2\pi\sigma^2}},  
\frac{M}{2\norm{\psi}_{\infty}},1
\right).\end{align}
\end{lemma}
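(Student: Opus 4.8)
The plan is to verify the two claims of Lemma~\ref{lem:BIdens} separately: that $f_0$ and each $f_{\bm\theta}$ integrate to $1$ and are nonnegative (hence are densities), and that both belong to $\Sigma(\alpha,L,M,\R)$. The integration claim is the easy part. Since $\int_0^1\psi(x)\,\rmd x=0$, a change of variable gives $\int \psi(xQ-q)\,\rmd x = Q^{-1}\int_0^1\psi=0$ for every $q$, so the perturbation adds nothing to the integral and $\int f_{\bm\theta}=\int f_0=1$. Nonnegativity will follow by controlling the perturbation in sup-norm: the supports $[q/Q,(q+1)/Q]$ are disjoint, so at any $x$ at most one summand is active, and $|\delta Q^{-\gamma}\theta_q\psi(xQ-q)|\le \delta\norm{\psi}_\infty$ (using $Q\ge1$, $\gamma\ge\alpha\ge1$). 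On the relevant interval $f_0(x)\ge f_0(1)=e^{-1/(2\sigma^2)}/\sqrt{2\pi\sigma^2}$, and the first bound in \eqref{eq:delta} is exactly what guarantees $\delta\norm{\psi}_\infty$ does not exceed this, keeping $f_{\bm\theta}\ge0$.

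For the H\"older membership, I would first recall that $f_0$ itself lies in $\Sigma(\alpha,L/2,M/2,\R)$ by the choice of $\sigma$ noted just before the lemma, so it suffices to show the perturbation $g_{\bm\theta}(x):=\delta Q^{-\gamma}\sum_q\theta_q\psi(xQ-q)$ contributes at most $L/2$ to the H\"older seminorm and $M/2$ to the sup-norm, and that $f_{\bm\theta}$ remains $C^{\lfloor\alpha\rfloor}$. The sup-norm bound is immediate from the disjoint-support estimate above together with the second entry of \eqref{eq:delta}. For smoothness, the conditions $\psi^{(j)}(0)=\psi^{(j)}(1)=0$ for $0\le j\le\lfloor\alpha\rfloor$ ensure that the rescaled bumps glue together into a globally $C^{\lfloor\alpha\rfloor}$ function: at each junction point $q/Q$ the left and right derivatives of $g_{\bm\theta}$ up to order $\lfloor\alpha\rfloor$ all vanish, so there is no jump.

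The seminorm bound is the computational heart. Differentiating $\lfloor\alpha\rfloor$ times pulls out a factor $Q^{\lfloor\alpha\rfloor}$, giving $g_{\bm\theta}^{(\lfloor\alpha\rfloor)}(x)=\delta Q^{\lfloor\alpha\rfloor-\gamma}\sum_q\theta_q\psi^{(\lfloor\alpha\rfloor)}(xQ-q)$. For two points $x,x'$ in the \emph{same} bump support, the assumed H\"older bound on $\psi^{(\lfloor\alpha\rfloor)}$ gives, after accounting for the $Q$-rescaling of the argument,
\[
|g_{\bm\theta}^{(\lfloor\alpha\rfloor)}(x)-g_{\bm\theta}^{(\lfloor\alpha\rfloor)}(x')|
\le \delta Q^{\lfloor\alpha\rfloor-\gamma}\,\frac{L}{4}\,Q^{\alpha-\lfloor\alpha\rfloor}|x-x'|^{\alpha-\lfloor\alpha\rfloor}
=\tfrac{L}{4}\delta\,Q^{\alpha-\gamma}|x-x'|^{\alpha-\lfloor\alpha\rfloor},
\]
which is at most $\tfrac{L}{4}|x-x'|^{\alpha-\lfloor\alpha\rfloor}$ since $\gamma\ge\alpha$ and $\delta\le1$. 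The main obstacle, and the step requiring care, is the case where $x$ and $x'$ lie in \emph{different} bumps (or one sits in a gap): here one cannot directly invoke the single-bump estimate. The standard device is to use the vanishing of $\psi^{(\lfloor\alpha\rfloor)}$ at the endpoints to insert intermediate points at the boundaries of the intervening supports, where $g_{\bm\theta}^{(\lfloor\alpha\rfloor)}$ equals zero, and bound each piece by the within-bump estimate applied to the portion of $|x-x'|$ lying in that support; summing and using concavity of $t\mapsto t^{\alpha-\lfloor\alpha\rfloor}$ recovers the bound $\tfrac{L}{4}|x-x'|^{\alpha-\lfloor\alpha\rfloor}$ overall. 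Combining the $f_0$ contribution $L/2$ with this $L/4\le L/2$ gives the total seminorm bound $L$, completing the verification that $f_{\bm\theta}\in\Sigma(\alpha,L,M,\R)$.
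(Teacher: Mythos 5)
Your proof is correct and follows essentially the same route as the paper's: integration via $\int\psi=0$, nonnegativity and the sup-norm bound from the two conditions on $\delta$, and the H\"older seminorm from the rescaling factor $Q^{\alpha-\gamma}\le 1$ combined with the vanishing of $\psi^{(j)}$ at the bump endpoints to handle points lying in different bumps (the paper writes out only the case $x\in[0,1]$, $x'\notin[0,1]$ and dismisses the rest as ``similar''). The one small slip is that the concavity step in your cross-bump case actually yields $2^{1-(\alpha-\lfloor\alpha\rfloor)}\cdot\tfrac{L}{4}\le\tfrac{L}{2}$ rather than $\tfrac{L}{4}$, which still combines with the $L/2$ contribution of $f_0$ to give the required total seminorm bound $L$.
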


\begin{proof}[Proof of Lemma \ref{lem:BIdens}]
Note that $f_{0}$   and $f_{{\bm \theta}}$ integrate to 1 on $\R$ by definition of $\psi$. Moreover for $x\in \R\setminus[0,1]$, $f_{\bm \theta}(x)=f_{0}(x)\ge 0$. For $x\in \left[\frac{q}{Q},\frac{q+1}{Q}\right]$, $0\le q\le Q-1$ it holds that
\[
f_{\bm \theta}(x)= 
\frac{1}{\sqrt{2\pi\sigma^2}}
e^{-x^2/(2\sigma^2)}
+\delta Q^{-\gamma }\theta_{q}\psi(xQ-q)
\ge  \frac{1}{\sqrt{2\pi\sigma^{2}}}e^{-1/(2\sigma^2)}-\delta \|\psi\|_{\infty} \ge 0
\]
for $\delta$ as in \eqref{eq:delta}. Note that $\|\psi\|_{\infty}<\infty$ as $\psi$ is continuous on the compact interval $[0,1]$, implying that $\|f_{\bm  \theta}\|_{\infty}\le \frac M2+\delta\|\psi\|_{\infty}\le M$ for $\delta$ as in \eqref{eq:delta}. To check that  it belongs to $\Sigma(\alpha, L,M,\R)$, it remains to control  $|f_{\bm  \theta}^{(\lfloor \alpha\rfloor)}(x)-f_{\bm  \theta}^{(\lfloor \alpha\rfloor)}(x')|$ for all $x,x',\ |x-x'|\le 1$ .
First, if $x,x'\notin [0,1]$, we get
\[ |f_{\bm \theta}^{\lfloor\alpha\rfloor}(x)-f_{\theta}^{\lfloor \alpha\rfloor}(x')\vert =\vert f_0^{\lfloor \alpha\rfloor }(x)-f_0^{\lfloor \alpha\rfloor }(x')\vert \leq \frac{L}{2}|x-x'|^{\alpha-\lfloor \alpha\rfloor }.\] 
If $x\in[0,1]$ and $x'\notin[0,1]$ then there exists $q_{0}$, $x\in[\tfrac {q_{0}}{Q},\tfrac{q_{0}+1}Q]$ and we can write
\begin{align*}|f_{\bm  \theta}^{(\lfloor \alpha\rfloor)}(x)-&f_{\bm  \theta}^{(\lfloor \alpha\rfloor)}(x')|\le \delta Q^{-\gamma+\lfloor \alpha\rfloor}\theta_{q_{0}}|
\psi^{(\lfloor \alpha\rfloor)}(xQ-q_{0})|
+|f_0^{\lfloor \alpha\rfloor }(x)-f_0^{\lfloor \alpha\rfloor }(x')|
\\ &\leq  \delta Q^{-\gamma+\lfloor \alpha\rfloor}\theta_{q_{0}}|
\psi^{(\lfloor \alpha\rfloor)}(xQ-q_{0})-\psi^{(\lfloor \alpha\rfloor)}(0)|
+\frac{L}{2}|x-x'|^{\alpha-\lfloor \alpha\rfloor }\\
&= \delta Q^{-\gamma+\lfloor \alpha\rfloor}\theta_{q_{0}}|
\psi^{(\lfloor \alpha\rfloor)}(xQ-q_{0})-\psi^{(\lfloor \alpha\rfloor)}(1)|
+\frac{L}{2}|x-x'|^{\alpha-\lfloor \alpha\rfloor }\\
&\le \delta\frac{L}{4}Q^{-\gamma+\lfloor \alpha\rfloor}\{(
xQ-q_{0})^{\alpha-\lfloor \alpha\rfloor}\wedge (
1-xQ-q_{0})^{\alpha-\lfloor \alpha\rfloor}\}
+\frac{L}{2}|x-x'|^{\alpha-\lfloor \alpha\rfloor }\\
&\le \frac{L}{4}Q^{-\gamma+\alpha}\{x^{\alpha-\lfloor \alpha\rfloor}\wedge (1-x)^{\alpha-\lfloor \alpha\rfloor}\}
+\frac{L}{2}|x-x'|^{\alpha-\lfloor \alpha\rfloor }\\
&\le \frac{L}{4}Q^{-\gamma+\alpha}|x-x'|^{\alpha-\lfloor \alpha\rfloor}+\frac{L}{2}|x-x'|^{\alpha-\lfloor \alpha\rfloor}. \end{align*} As $\gamma\geq \alpha$, we obtain that $|f_{\bm  \theta}^{(\lfloor \alpha\rfloor)}(x)-f_{\bm  \theta}^{(\lfloor \alpha\rfloor)}(x')|\leq L|x-x'|^{\alpha-\lfloor \alpha\rfloor}$. A similar argument is used if both $x,x'\in[0,1]$.
\end{proof}

Next we show a Lemma that implies point (i) above for $Q$ and $\gamma$ suitably chosen.
\begin{lemma}\label{lem:BInorm} Let $Q\ge 8$, there exist $N\ge 2^{Q/8}$ elements $\{\bm\theta^{(1)},\ldots,\bm\theta^{(N)}\}$ of $\{0,1\}^{Q}$, $\bm\theta^{(0)}=(0,\ldots,0) $, such that it holds
\[\|f_{\bm\theta^{(j)}}-f_{{\bm \theta^{(k)}}}\|_{p,I_{0}}^{p}\ge \frac18\delta^{p}\|\psi\|^{p}Q^{-p\gamma},\quad \forall 0\le j<k\le N.\] \end{lemma}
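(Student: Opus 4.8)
The plan is to reduce the lower bound on the pairwise $L_p$-distances to a purely combinatorial statement about Hamming distances, exploiting that the building blocks $\psi(\cdot\,Q-q)$ have pairwise disjoint supports. First I would write, for any $\bm\theta,\bm\theta'\in\{0,1\}^{Q}$,
\[
f_{\bm\theta}(x)-f_{\bm\theta'}(x)=\delta Q^{-\gamma}\sum_{q=0}^{Q-1}(\theta_{q}-\theta'_{q})\,\psi(xQ-q),
\]
and observe that the $q$-th summand is supported on $[q/Q,(q+1)/Q]$, these intervals being pairwise disjoint and contained in $[0,1]\subset I_{0}=[-1,1]$. Consequently the $L_p$-norm on $I_{0}$ splits over $q$:
\[
\norm{f_{\bm\theta}-f_{\bm\theta'}}_{p,I_{0}}^{p}=\delta^{p}Q^{-p\gamma}\sum_{q=0}^{Q-1}|\theta_{q}-\theta'_{q}|^{p}\int_{q/Q}^{(q+1)/Q}|\psi(xQ-q)|^{p}\,\rmd x.
\]

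Next I would evaluate each integral by the change of variable $u=xQ-q$, which gives $\tfrac1Q\int_{0}^{1}|\psi(u)|^{p}\rmd u=\norm{\psi}_{p}^{p}/Q$, a quantity independent of $q$. Since $\theta_{q},\theta'_{q}\in\{0,1\}$, one has $|\theta_{q}-\theta'_{q}|^{p}=|\theta_{q}-\theta'_{q}|=\units{\theta_{q}\neq\theta'_{q}}$, so the remaining sum is exactly the Hamming distance $\rho(\bm\theta,\bm\theta'):=\sum_{q=0}^{Q-1}\units{\theta_{q}\neq\theta'_{q}}$. Combining, I obtain the exact identity
\[
\norm{f_{\bm\theta}-f_{\bm\theta'}}_{p,I_{0}}^{p}=\frac{\delta^{p}\norm{\psi}_{p}^{p}}{Q^{p\gamma+1}}\,\rho(\bm\theta,\bm\theta'),
\]
where $\norm{\psi}_{p}$ is the $L_p$-norm written $\norm{\psi}$ in the statement.

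The only remaining ingredient, and the crux of the argument, is to select codewords that are pairwise well separated in Hamming distance: this is essential because two vectors $\bm\theta,\bm\theta'$ differing in a single coordinate would yield a distance of order $Q^{-p\gamma-1}$, far too small for the claim. Here I would invoke the Varshamov--Gilbert bound (Lemma 2.9 in Tsybakov (2009) \cite{Tsyb}): for $Q\ge 8$ there exist $\bm\theta^{(0)}=(0,\ldots,0),\bm\theta^{(1)},\ldots,\bm\theta^{(N)}$ in $\{0,1\}^{Q}$ with $N\ge 2^{Q/8}$ and $\rho(\bm\theta^{(j)},\bm\theta^{(k)})\ge Q/8$ for all $0\le j<k\le N$. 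Substituting $\rho\ge Q/8$ into the identity makes the factor $Q/8$ cancel the $Q^{-1}$, leaving the asserted bound $\tfrac18\delta^{p}\norm{\psi}_{p}^{p}Q^{-p\gamma}$. The verification is routine once these two observations are in place; the entire content of the lemma lies in the disjoint-support decomposition and the combinatorial separation guaranteed by Varshamov--Gilbert, which I expect to be the only non-mechanical step.
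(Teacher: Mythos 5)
Your proof is correct and follows essentially the same route as the paper: disjoint supports of the $\psi(\cdot\,Q-q)$ reduce the $L_p$-distance to the Hamming distance, and the Varshamov--Gilbert bound supplies codewords with $\rho\ge Q/8$. In fact your version is slightly cleaner, since you record the disjoint-support computation as the exact identity $\norm{f_{\bm\theta}-f_{\bm\theta'}}_{p,I_{0}}^{p}=\delta^{p}\norm{\psi}_{p}^{p}Q^{-p\gamma-1}\rho(\bm\theta,\bm\theta')$, which is the direction actually needed for the lower bound (the paper displays a ``$\leq$'' at that step, evidently a typo).
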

\begin{proof}[Proof of Lemma \ref{lem:BInorm}]
Let ${\bm \theta},\ {\bm \theta'} \in \{0,1\}^{Q}$, we compute 
\begin{align*}
\|f_{\bm\theta}-f_{{\bm \theta'}}\|_{p,I_{0}}^{p}&=\int_{-1}^{1}\delta^{p}Q^{-p\gamma}\left\vert \sum_{q=0}^{Q-1}(\theta_{q}-\theta'_{q})\psi(xQ-q)\right\vert ^{p}\rmd x\leq \delta^{p}Q^{-p\gamma-1}\|\psi\|^{p}\rho({\bm \theta}, {\bm \theta'} )
\end{align*} where we used that the supports of   the $\psi(.Q-q)$ are disjoint, a change of variable and where $\rho$ denotes the Hamming distance defined by $\rho({\bm \theta}, {\bm \theta'} )=\sum_{q=0}^{Q-1}\mathbf{1}_{\theta_{q}\ne\theta'_{q}}$. Next the Varshamov–Gilbert bound (see Lemma 2.9 in Tsybakov (2009) \cite{Tsyb} ensures that for $Q\ge 8$ there exist $N\ge 2^{Q/8}$ elements $\{\bm\theta^{(1)},\ldots,\bm\theta^{(N)}\}$ of $\{0,1\}^{Q}$ such that $\rho({\bm \theta}^{(j)}, {\bm \theta}^{(k)} )\ge Q/8$ for all $0\le j<k\le N$ with $\bm \theta^{(0)}=(0,\ldots,0)$. This leads to the desired result.
\end{proof}
Finally,  we show a Lemma that enables to establish point (ii) above.\begin{lemma}\label{lem:BIchi} 
 Let $\bm\theta\in\{0,1\}^{Q}$, and $\delta$ satisfying \eqref{eq:delta} and  $\delta\le  {e^{-1/(4\sigma^2)}}/({4\sqrt{\log(2) \sqrt{2\pi\sigma^2}}\norm{\psi}_{2}} ) .$ Then, it holds that 
 $$\chi^{2}(f_{\bm\theta},f_{0})\leq \frac{Q^{-2\gamma}}{16\log(2)}.$$
\end{lemma}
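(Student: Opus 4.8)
The plan is to start from the definition of the $\chi^2$-divergence, use the disjointness of the supports of the bumps to linearise the square, and then bound the Gaussian denominator from below on $[0,1]$. First I would write
\[
\chi^2(f_{\bm\theta},f_0)=\int_{\R}\frac{(f_{\bm\theta}(x)-f_0(x))^2}{f_0(x)}\rmd x,
\]
and substitute $f_{\bm\theta}(x)-f_0(x)=\delta Q^{-\gamma}\sum_{q=0}^{Q-1}\theta_q\psi(xQ-q)$. Since each $\psi(\cdot\,Q-q)$ is supported on $[q/Q,(q+1)/Q]$, these bumps have pairwise disjoint supports, so the square of the sum equals the sum of the squares; moreover $\theta_q\in\{0,1\}$ yields $\theta_q^2=\theta_q$. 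This gives
\[
\chi^2(f_{\bm\theta},f_0)=\delta^2Q^{-2\gamma}\sum_{q=0}^{Q-1}\theta_q\int_{\R}\frac{\psi(xQ-q)^2}{f_0(x)}\rmd x .
\]

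Next I would control the denominator. All the relevant supports lie in $[0,1]$, where $x^2\le 1$ and hence $f_0(x)\ge \tfrac{1}{\sqrt{2\pi\sigma^2}}e^{-1/(2\sigma^2)}$, i.e. $1/f_0(x)\le \sqrt{2\pi\sigma^2}\,e^{1/(2\sigma^2)}$ on $[0,1]$. The change of variable $u=xQ-q$ then gives $\int_{\R}\psi(xQ-q)^2\rmd x=\tfrac1Q\norm{\psi}_2^2$, and since $\sum_{q}\theta_q\le Q$ the factor $1/Q$ exactly cancels the number of terms, leaving
\[
\chi^2(f_{\bm\theta},f_0)\le \delta^2Q^{-2\gamma}\sqrt{2\pi\sigma^2}\,e^{1/(2\sigma^2)}\norm{\psi}_2^2 .
\]

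Finally I would insert the assumed bound on $\delta$: squaring the threshold $\delta\le e^{-1/(4\sigma^2)}/\big(4\sqrt{\log(2)\sqrt{2\pi\sigma^2}}\,\norm{\psi}_2\big)$ and using $\sqrt{\log(2)\sqrt{2\pi\sigma^2}}=\sqrt{\log(2)}\,(2\pi\sigma^2)^{1/4}$ shows that $\delta^2\sqrt{2\pi\sigma^2}\,e^{1/(2\sigma^2)}\norm{\psi}_2^2\le 1/(16\log 2)$, whence the claimed bound $\chi^2(f_{\bm\theta},f_0)\le Q^{-2\gamma}/(16\log 2)$. I do not expect a genuine obstacle here: the one real idea is the disjoint-support cancellation, which removes the cross terms and collapses $\theta_q^2$ into $\theta_q$ (so that the count of active bumps is absorbed by the $1/Q$ from the rescaling). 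The remaining work is the elementary lower bound on the Gaussian over $[0,1]$ together with the bookkeeping of constants needed so that the stated threshold on $\delta$ matches the one produced by the computation exactly.
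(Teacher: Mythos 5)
Your proof is correct and follows essentially the same route as the paper's: expand the square using the disjoint supports of the bumps (with $\theta_q^2=\theta_q$), bound $1/f_0$ by $\sqrt{2\pi\sigma^2}\,e^{1/(2\sigma^2)}$ on $[0,1]$, rescale to get $\|\psi\|_2^2 Q^{-2\gamma}$, and absorb the constants via the stated threshold on $\delta$. No gaps.
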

\begin{proof}[Proof of Lemma \ref{lem:BIchi}] Straightforward computations lead to
\begin{align*}
\chi^{2}(f_{\bm\theta},f_{0})&=\int_{\R}\frac{\big(f_{\bm\theta}(x)-f_{0}(x)\big)^{2}}{f_{0}(x)}\rmd x=\delta^{2}Q^{-2\gamma}\sqrt{2\pi\sigma^2}\int_{0}^{1}e^{x^2/(2\sigma^2)}\sum_{q=0}^{Q-1}\theta_{q}\psi^{2}(xQ-q)\rmd x\\
&\le \sqrt{2\pi\sigma^2}e^{1/(2\sigma^2)}\delta^{2}Q^{-2\gamma}\sum_{q=0}^{Q-1}\int_{\frac qQ}^{\frac{q+1}{Q}} \psi^{2}(xQ-q)\rmd x=  \sqrt{2\pi\sigma^2}e^{1/(2\sigma^2)}\delta^{2}\|\psi\|_{2}^{2}Q^{-2\gamma}\\
&\leq \frac{Q^{-2\gamma}}{16\log(2)}.\end{align*}
\end{proof}
For all $1\le j\le N$ it holds,  using that $x\mapsto \log(1+x)\le x$,
\[
	\chi^{2}(\PP_{\bm\theta^{(j)}}^{\otimes n},\PP_{\bm\theta^{(0)}}^{\otimes n})=\left(1+\chi^{2}(f_{\bm\theta^{(j)}},f_{0})\right)^{n}-1\le \exp\left(\frac{nQ^{-2\gamma}}{16\log(2)}\right)\]
If $nQ^{-2\gamma}\leq Q$,  we obtain, as $N\ge 2^{Q/8},\ Q\ge 8$,
\[
\chi^{2}(\PP_{\bm\theta^{(j)}}^{\otimes n},\PP_{\bm\theta^{(0)}}^{\otimes n})= \exp\left(Q/(16\log(2))\right) \le   \kappa N
\] thanks to the condition on $\delta$, for some $\kappa \in(0,\frac18)$. 
Then we need to take $\gamma\geq \alpha $ (to ensure that $f_{\theta}$ belongs to $\Sigma(\alpha,L,M,I))$, and $nQ^{-2\gamma}\leq 1$ (so the $\chi^2$-divergence is small enough). To conclude, we set $\gamma=\alpha$ and  $Q^{-2\alpha}=n^{-p\alpha/(2\alpha+1)}$, with Lemma \ref{lem:BIdens} and \ref{lem:BInorm} together with Theorem 2.7 in Tsybakov (2009) \cite{Tsyb} implies
$$\inf_{\hat f}\sup_{f\in \Sigma(\alpha,L,M,I)}\EE{\|\hat f-f\|^{p}_{p,I_{0}}}\ge cn^{-\frac{p\alpha}{2\alpha+1}},$$ for some positive constant $c$.

\paragraph{Bound on $H_{\alpha,\beta}(\ep)$}

It is sufficient to consider the case where there is only one irregularity at $0$, we thus consider the space $H^{0}_{\alpha,\beta}(\eps)=\Sigma(\beta,L,M,I)\cap \Sigma(\alpha,L,M,I\setminus[-\ep,\ep])$. Contrary to the case studied above, as we wish to recover the rate around the irregularity point 0, we can construct a 2-test hypothesis lower bound as follows.
Consider the same  density $f_{0}$ as before and the same function $\psi$, define the density $f_{1}$  as 
 $f_1(x)=f_0(x)+\delta Q^{-\gamma} \psi(Qx)$
with $\delta$ satisfying \eqref{eq:delta},  $\gamma\geq \beta$, and $Q\geq 1/\eps$. 
 Indeed, it is sufficient to check that the condition $Q\ge 1/\eps$ ensures that $f_{1}$ has regularity $\alpha$ on the domain $I\setminus[0,\tfrac1Q]\subset I\setminus[-\eps,\eps]$ implying that $f_{1}\in H^{0}_{\alpha,\beta}(\eps). $

We have that 
\[
	\|f_{0}-f_{1}\|_{p,I_{0}}^{p}=\int_{0}^{\frac1Q}\delta^{p}Q^{-p\gamma}\psi^p(xQ)\rmd x=\delta^{p}Q^{-p\gamma-1}\|\psi\|_{p}^{p}.
\]
Moreover, 
\begin{align*}
	\chi^{2}(f_{1},f_{0})&=\int_{\R}\frac{\big(f_{1}(x)-f_{0}(x)\big)^{2}}{f_{0}(x)}\rmd x=\delta^{2}Q^{-2\gamma}\sqrt{2\pi\sigma^2}\int_{0}^{\frac1Q}e^{x^2/(2\sigma^2)}\psi^{2}(xQ)\rmd x\\
	&\le C\delta^{2}Q^{-2\gamma}\int_{0}^{\frac{1}{Q}} \psi^{2}(xQ)\rmd x=  C	\delta^{2}\|\psi\|_{\infty}^{2}Q^{-2\gamma-1}.\end{align*}
 Then $\chi^2(f_0^{\otimes n},f_1^{\otimes n})\leq C'nQ^{-2\gamma-1}$. 
Let us set $\gamma=\beta$, and $Q=n^{1/(2\beta+1)}\ge 1/\eps$. By Proposition 2.1 and 2.2 in \cite{Tsyb}  together with the latter lower bound, we obtain for some positive constant $c$: 
\[\inf_{\hat f}\sup_{f\in H^{0}_{\alpha,\beta}(\eps)}\E{\|\hat f-f\|^{p}_{p,I_{0}}}\ge c\left(n^{-\frac{p\alpha}{2\alpha+1}}+n^{-\frac{p\beta+1}{2\beta+1}}\right).\]

\subsection{Proof of Theorem \ref{thm:UBDP}}
	As in the proof of Theorem \ref{thm:UBPH}, the proof is done when there is only one isolated irregularity, at 0 then $c_{0}=1$ and  $0\in I_{0}$. Consider a function $f\in C^{\ell}(\R\setminus\{0\})$. 
By Equation \eqref{eq:Rosprf1}, the variance term $\E{\left[|\hat{f}_{h_0(x)}(x)-f_{h_0(x)}(x)|^p\right]}$ is bounded by  $c_pv(h_0)^p$ for all $p\ge 1$.
In the sequel, we  bound the bias term depending on the value of $x$. 

$\bullet$ If $|x|\leq n^{-1/(2\beta+1)}$,
 as $f$ is in $\Sigma(\beta,L,M,I)$, it always holds that $|f_{h}(x)-f(x)|\leq Lh^{\beta}\norm{K}_1$ (see Equation \eqref{eq:BiasPH1}). As $h_0(x)=(\cons n)^{-1/(2\beta+1)}/2$, observe that  $Lh_0(x)^{-\beta}\norm{K}_1\le v(h_0(x))$, implying that $|f_{h_0(x)}(x)-f(x)|\leq v(h_0(x))$.  The same results, replacing $\beta$ with $\alpha$, hold if $|x|\ge c_{0}$.
  
 $\bullet$ If $c_{0}\geq |x|\geq n^{-1/(2\beta+1)}$, we are in a regime sufficiently away from the irregularity point 0, as the derivative is unbounded it may still affect the evaluation of the bias. For $h$ is such that $0\le h< |x|/2$, then   the $\ell$-derivative $f^{(\ell)}$ is well defined on $[x-h, x+h]$ and smaller  than $ L(|x|/2)^{-\gamma}$. 
  Using   \eqref{eq:BiasTayl} we obtain
 \begin{align}
|f_{h}(x)-f(x)|\leq \frac{\|Ku^{\ell}\|_{1}}{\ell !}h^{\ell}L(|x|-h)^{-\gamma} \leq 2^{\gamma}\norm{K}_1 L h^{\ell} \vert x\vert^{-\gamma}.\label{eq:BIASup}
 \end{align}  The following bound on the bias term holds
  \begin{align}
|f_{h}(x)-f(x)|\leq 2^{\gamma}\norm{K}_1 L\  \min\left\{h^{\ell} \vert x\vert^{-\gamma},h^{\beta}\right\}.\label{eq:BIASup}
 \end{align} The 
 minimum is attained by the first term if $|x|\ge h^{(\ell-\beta)/\gamma} $ and the bandwidth performing the bias-variance compromise is given by $ h_{\gamma}(x)=(\cons n)^{-1/(2\ell+1)}|x|^{2\gamma/(2\ell+1)}$.

In case $\ell-\beta< \gamma$ note that the bias is in $h _{\gamma}^{\ell}(x)|x|^{-\gamma}$, if both the constraint $|x|\ge h_{\gamma}(x)^{(\ell-\beta)/\gamma} $ and $h_{\gamma}(x)\le |x|/2$ are satisfied. Noting that $h_{\gamma}(x)\le |x|^{{\gamma}/(\ell-\beta)}\le |x|$ as $|x|\le 1$ and $\gamma>\ell-\beta$ implies that one should select $h_{0}$ of the order of $h_{\gamma}(x)$ for all $x$ such that $|x|\ge x_{0}:=(\cons n)^{-(\ell-\beta)/(\gamma(2\beta+1))}$, where the latter value is computed such that $h_{\gamma}(x_{0})=x_{0}^{{\gamma}/(\ell-\beta)}$, and as $n^{-1/(2\beta+1)}$ otherwise. Therefore, we set $h_{0}(x)=h_{\gamma}(x)/2$ for all $|x|\in\left[x_{0},c_{0}\right]$ and $h_{0}(x)=(\cons n)^{-1/(2\beta+1)}/2$ for $|x|\in\left[(\cons n)^{-1/(2\beta+1)},x_{0}\right]$.

In case $\ell-\beta\ge \gamma$, in comparison with the later case one can have $|x|\le h_{\gamma}(x)\le |x|^{-\gamma/(\ell-\beta)}$, this occurs on the interval $|x|\le x_{1}$ where $x_{1}:=(\cons n)^{-1/(2(\ell-\gamma)+1)}$ is such that $x_{1}=h_{\gamma}(x_{1}).$ Note that using that $\ell-\beta\ge \gamma$ we get that $x_{1}\ge x_{0}$ and that $x_{0}\le (\cons n)^{-1/(2\beta+1)}$.  Therefore, we set  $h_{0}(x)=|x|/2$ for all $|x|\in\left[(\cons n)^{-1/(2\beta+1)},x_{1}\right]$ and $h_{0}(x)=h_{\gamma}(x)/2$ for all $|x|\in\left[x_{1},c_{0}\right]$.

 In all above cases, $h_0$ is selected as half of the value that minimizes the bias-variance compromise. Since the bias is an increasing function of $h$ and  $v(h)$ a decreasing function, we maintain the property that $|f_{h_0}(x)-f(x)|\leq v(h_0(x))$.

Then, for any $x\in I_0$, we have that 
	\begin{equation}
		|f_{h_0(x)}(x)-f(x)|\leq v(h_0(x)). \label{eq:lien_biais_variance_Dw}
	\end{equation}

It remains to compute the risk. 
Let us first consider the case when $\gamma\leq \ell-\beta$. We get for a constant $C$ whose value may change from line to line 
\begin{align*}
	\EE{ \int_{I_{0}} |\hat{f}_{h_0(x)}(x)-f(x)|^p\rmd x}&\le C\bigg(\int_0^{{(\kappa n)}^{-\frac1{2\beta+1}}} n^{-\frac{p\beta}{2\beta+1}} \rmd x + \int_{{(\kappa n)}^{-\frac1{2\beta+1}}}^{{(\kappa n)}^{-\frac1{2\ell-2\gamma+1}}} \frac{1}{(nx)^{\frac p2}}\rmd x\\
		&+ \int_{{(\kappa n)}^{-\frac1{2\ell-2\gamma+1}}}^1 n^{-\frac p2} n^{\frac{p/2}{2\ell+1}} x^{-\frac{\gamma p}{2\ell+1}}\rmd x+\int_{I_0\setminus[-1,1]} n^{-{\frac p2}}n^{\frac{p/2}{2\ell+1}}\rmd x\bigg) \\
		 &\le C\bigg( n^{-\frac{p\beta+1}{2\beta+1}} +  \frac{\log(n)}{n}\units{p=2}{+ n^{-\frac{p(l-\gamma)+1}{2(l-\gamma)+1}} \units{1\le p<2}}\\
		&+ n^{- \frac{p\ell}{2\ell+1}}\left( 1+ \log(n)\units{ \gamma p=2\ell+1}+n^{\frac{1}{1+2\ell-2\gamma}\left(\frac{\gamma p}{2\ell+1}-1\right)}\units{\gamma p>2\ell+1}\right)\bigg).
\end{align*}
As $\gamma\leq \ell-\beta$, it follows that $1+2\ell-2\gamma\geq 1+2\beta$ and
\[
	-\frac{p\ell}{2\ell+1}+\frac{1}{1+2\ell-2\gamma}\left(\frac{\gamma p}{2\ell+1}-1\right)\leq -\frac{p\ell}{2\ell+1}+\frac{1}{1+2\beta}\left(\frac{(\ell-\beta) p}{2\ell+1}-1\right)
	=\frac{-p\beta-1}{2\beta+1}.
	\]
	Moreover, if $1\le p<2$, the function $x\to (px+1)/(2x+1)$ is decreasing, implying that $$n^{-(p(\ell-\gamma)+1)/(2(\ell-\gamma)+1)}\leq n^{-(p\ell+1)/(2l+1)}\leq n^{-p\ell/(2\ell+1)}.$$ 
Therefore, if $\gamma\leq \ell-\beta$, we get the desired bound in
\[	\E\left[ \|\hat{f}_{h_0}-f\|^p_{p,I_{0}}\right]\lesssim n^{-\frac{p\beta+1}{2\beta+1}} +  \frac{\log(n)}{n}\units{p=2} 
+ n^{- \frac{p\ell}{2\ell+1}}\left( 1+ \log(n)\units{\gamma p=2\ell+1}\right).\]
Otherwise if $\gamma>\ell-\beta$, the proof is done in the same way, and we get 
\begin{align*}
	\E\left[ \|\hat{f}_{h_0}-f\|^p_{p,I_{0}}\right]&\lesssim n^{-\frac{\gamma p\beta+(\ell-\beta)}{\gamma(2\beta+1)}} + n^{- \frac{p\ell}{2\ell+1}}\left( 1+ \log(n)\units{\gamma p=2\ell+1}+n^{\frac{\ell-\beta}{\gamma(1+2\beta)}\left(\frac{\gamma p}{2\ell+1}-1\right)}\units{\gamma p>2\ell+1}\right).
\end{align*}
To complete the proof note that
\[ \frac{\gamma p\beta+(\ell-\beta)}{\gamma(2\beta+1)}= \frac{p\ell}{2\ell+1}-\frac{\ell-\beta}{\gamma(1+2\beta)}\left(\frac{\gamma p}{2\ell+1}-1\right).\]
Generalization to $m$ isolated regularities in $I_{0}$ is straightforward by cutting the integral around each irregularity point $x_{i},\ 1\le i\le m$.

\subsection{Proof of Theorem \ref{thme:UBadapPoint} }
	Fix $x\in I_0$. The point-wise risk is bounded on the two  events $\{\hat h_{n}(x)\ge h^{*}_{n}(x)\}$ and its complementary $\{\hat h_{n}(x)< h^{*}_{n}(x)\}$. To clarify the reading we drop the dependency in $x$.
	
On the event  $\{\hat h_{n}\ge h^{*}_{n}\}$ the variance term can be easily controlled compared to the bias term. For some generic constant $C$, whose value may change from line to line, we write
	\begin{align*}
	\E[{|\hat{f}_{\hat h_{n}}(x)-f(x)|^{p}\mathbf{1}_{\{\hat h_{n}\ge h^{*}_{n}\}}]}\le C \Big(&	\E[{|\hat{f}_{\hat h_{n}}(x)-\hat f_{h^{*}_{n}}(x)|^{p}\mathbf{1}_{\{\hat h_{n}\ge h^{*}_{n}\}}]}+
	\E[{|\hat{f}_{ h^{*}_{n}}(x)- f_{h^{*}_{n}}(x)|^{p}]}\\ &+
	\E[{|{f}_{ h^{*}_{n}}(x)- f(x)|^{p}]}\Big)=:C(T_{1}+T_{2}+T_{3}).
	\end{align*} On $\{\hat h_{n}\ge h^{*}_{n}\}$ using the definition of $\hat{h}_n$ \eqref{eq:hadapt}, it holds that 
\[|\hat{f}_{\hat h_{n}}(x)-\hat f_{h^{*}_{n}}(x)|\le \psi(\hat h_{n},h^{*}_{n})\le \sup_{h\in\mathcal H_{n}, h\ge h^{*}_{n}}\psi(h,h^{*}_{n})\le 2(D_{1}+2)v(h^{*}_{n})\lambda(h^{*}_{n}).\]
Indeed, 
$\psi(h,h')=2D_1v(h)\lambda(h)+v(h,h')\lambda(h')$ and for $h<h'$, 
\[v^2(h,h')\leq 2v^2(h)+2v^2(h')\leq 4v^2(h).\]
As $\lambda$ is non increasing, it follows that $T_{1}\le C v^{p}(h^{*}_{n})\lambda^{p}(h^{*}_{n})$.  By the definition of $h_n^*$ in \eqref{eq:hor}, it holds that $T_{3}\le Cv^{p}(h^{*}_{n})\lambda^{p}(h^{*}_{n}).$ The term $T_{2}$ is bounded using Equation
	 \eqref{eq:Rosprf1}, for $p\ge 1$ and using $nh^{*}_{n}\ge 1$, one gets 
	\begin{align}\label{eq:T2} T_{2}\le C  \frac{1}{(nh^{*}_n)^{p/2}}\le C v^{p}(h^{*}_{n}).\end{align}
	 Gathering all terms we get the desired result on $\{\hat h_{n}\ge h^{*}_{n}\}$.\\

	The study on the event $\{\hat h_{n}< h^{*}_{n}\}$ is more difficult as we no longer have  control on the variance of the estimator, however, this event occurs with low probability.
Consider for $h>\eta$, $(h,\eta)\in\mathcal H_{n}$	the event $$A_{n}(x,h,\eta):=\{|\hat{f}_{h}(x)-\hat{f}_{\eta}(x)|\geq \psi(h,\eta)\}$$ and note that $\{\hat h_{n}=ha^{-1}\}\subset\bigcup_{\eta\in\mathcal H_{n},\,\eta<h}A_{n}(x,h,\eta)$. Therefore it holds that
$$ \{\hat h_{n}< h^{*}_{n}\}\subset \bigcup_{h\in \mathcal H_{n},\, h\leq h^{*}_{n}}\bigcup_{\eta\in\mathcal H_{n},\,\eta<h}A_{n}(x,h,\eta)$$ and the risk can be bounded as follows
\begin{align*}
\E[{|\hat{f}_{\hat h_{n}}(x)-f(x)|^{p}\mathbf{1}_{\{\hat h_{n}< h^{*}_{n}\}}]}\le C &\sum_{h\in \mathcal H_{n},\, h<ah^{*}_{n}}\ \ \sum_{\eta\in\mathcal H_{n},\,\eta<h}\E[|\hat{f}_{ h}(x)-f(x)|^{p}\mathbf{1}_{A_{n}(x,h,\eta)}].
\end{align*} Equation \eqref{eq:hor}, for any $h<h^{*}_{n}$, yields $|{f}_{ h}(x)-f(x)|\le  D_{1}v(h^{*}_{n})\lambda(h_{n}^{*})\le D_{1} v(h)\lambda(h).$  
It follows using the latter together with \eqref{eq:T2} and the Cauchy-Schwarz inequality
\begin{align*}
\E[{|\hat{f}_{\hat h_{n}}(x)-f(x)|^{p}\mathbf{1}_{\{\hat h_{n}< h^{*}_{n}\}}]}\le C &\sum_{h\in \mathcal H_{n},\, h<ah^{*}_{n}}\ v^{p}(h)\lambda^{p}(h)\sum_{\eta\in\mathcal H_{n},\,\eta<h}\sqrt{\PP\big(A_{n}(x,h,\eta)\big)}\\
\le C &n^{-p/2}\sum_{h\in \mathcal H_{n},\, h<ah^{*}_{n}}\lambda^p(h)h^{-p/2}\sum_{\eta\in\mathcal H_{n},\,\eta<h}\sqrt{\PP\big(A_{n}(x,h,\eta)\big)}.
\end{align*}
We now control the probability of the event $A_{n}(x,h,\eta)$. The term $\hat{f}_{h}(x)-\hat{f}_{\eta}(x)$ appearing in $A_{n}(x,h,\eta)$ can be decomposed into a bias term and a variance term: 
\[\hat f_{h}-\hat{f}_{\eta}=[(\hat{f}_{h}-{f}_h)-(\hat f_{\eta}-f_{\eta})]+[(f_h-f)-(f_{\eta}-f)]=:V+B.\]
Since $\eta$ and $h$ are smaller than $h^{*}_{n}$, the bias term  is easily controlled with \eqref{eq:hor}\[|B|=|f_{h}(x)-f_\eta(x)|\leq 2D_1v(h^{*}_n)\lambda(h^{*}_n)\leq 2D_1v(h)\lambda(h)\]
as the function $h\mapsto  v(h)\lambda(h)$ is decreasing. 
Then, the variance term must be large on $A_{n}(x,h,\eta)$: indeed as  $\psi(h,\eta)=2D_1v(h)\lambda(h)+v(h,\eta)\lambda(\eta)$, it implies that \[|V|=| \hat{f}_{\eta}(x)-\hat{f}_h(x)-f_{\eta}(x)+f_h(x)|> v(h,\eta)\lambda(\eta).\]
Let us apply the Bernstein inequality to this term. Define the centered variables $Z_i=K_{h}(x-X_i) -K_\eta(x-X_i) -(f_{h}(x)-f_\eta(x))$ which satisfy $\var{Z_i}\le nv^{2}(h,\eta)$ (see \eqref{eq:v2etah}). 
Moreover, since $\eta<h$, it holds $|Z_i|\leq \frac{4\norm{K}_{\infty} }{\eta}$. Therefore, we apply the Bernstein inequality and get
\begin{align*}
\PP\left( \left|\frac{1}{n}\sum_{i=1}^n Z_i\right|\geq v(h,\eta)\lambda(\eta)\right)&
 \leq 2\exp\left( -\frac{nv^2(h,\eta)\lambda^2(\eta)/2}{nv^2(h,\eta)+\frac{4v(h,\eta)\lambda(\eta)\norm{K}_{\infty}}{3\eta}}\right) \\ &
\leq 2\exp\left( -\frac{\lambda^2(\eta)/2}{1+\frac{4\lambda(\eta)\norm{K}_{\infty}}{3n\eta v(h,\eta)}}\right).
\end{align*}
The quantity   $v(h,\eta)$ needs to be bounded from below. Remark that the Cauchy-Schwarz inequality gives
\[
v^2(h,\eta)= \frac{\norm{f}_{\infty}}{n}\left( \frac{\| K\|_{2}^2}{h}+\frac{\norm{K}_2^2}{\eta}-2\langle K_{h},K_{\eta}\rangle \right)
\geq \frac{\norm{f}_{\infty}}{n\eta}\norm{K}_{2}^2\left(1+\frac{\eta}{h}-2\sqrt{\frac{\eta}{h}}\right).
\]
The function $x\to 1+x-2\sqrt{x}=(1-\sqrt{x})^2$ is positive, minimum at  $x=1$, and decreasing for $0<x<1$. As $\eta/h\leq 1/a$, we get that $v^2(h,\eta)\geq \frac{\norm{f}_{\infty}}{n\eta}\norm{K}_{2}^2(1+\frac{1}{a}-\frac{2}{\sqrt{a}})$.  Using that $n\eta\ge \log^{3}n$, we derive that
\[ 0\le\frac{4\lambda(\eta)\|K\|_{\infty}}{3n\eta v(h,\eta)}\leq \frac{4\|K\|_{\infty}\sqrt{D_{2}}}{3\|K\|_{2}\sqrt{\|f\|_{\infty}(1-{\frac1a-\frac{2}{\sqrt{a}}})}} \frac{1}{\log n}.\]
This quantity tends to 0 and is smaller than 1 for $n$ such that
\[n\ge \exp\left(\frac{4\|K\|_{\infty}\sqrt{D_{2}}}{3\|K\|_{2}\sqrt{\norm{f}_{\infty}(1-{\frac1a-\frac{2}{\sqrt{a}}})}}\right).\] 
 Therefore, we obtain for $\eta<h< h^{*}_{n}$ that 
\[ \PP(A_{n}(x,h,\eta))\le\PP\left(\left|\frac{1}{n}\sum_{i=1}^n Z_i\right|\geq v(h,\eta)\lambda(\eta)\right)\leq 2\exp(-\lambda^2(\eta)/4)= 2\eta^{\frac{D_2}4}. \]
Gathering all  above results leads to
\begin{align*}
\E[{|\hat{f}_{\hat h_{n}}(x)-f(x)|^{p}\mathbf{1}_{\{\hat h_{n}< h^{*}_{n}\}}]}\le C &n^{-p/2}\sum_{h\in \mathcal H_{n},\, h<ah^{*}_{n}}\ \lambda^p(h) h^{-p/2} \sum_{\eta\in\mathcal H_{n},\,\eta<h}\eta^{\frac{D_{2}}{8}},
\end{align*} 
where using that $a>1$ and $D_{2}>0$  we get
$$ \sum_{\eta\in\mathcal H_{n},\,\eta<h}\eta^{\frac{D_{2}}{8}}=\sum_{j=\lfloor\log_a(1/h)\rfloor+1}^{\lfloor\log_a(n/\log^3(n))\rfloor} a^{-jD_2/8}\leq \frac{h^{D_2/8}}{1-a^{-D_2/8}}.$$
This allows us to write as $D_{2}/8-p/2>0$
\begin{align*}
\E[{|\hat{f}_{\hat h_{n}}(x)-f(x)|^{p}\mathbf{1}_{\{\hat h_{n}< h^{*}_{n}\}}]}\le &C n^{-p/2}\sum_{h\in \mathcal H_{n},\, h<ah^{*}_{n}}\ \lambda^p(h) h^{D_{2}/8-p/2}\\
\le& Cn^{-p/2} (\log n)^{p/2}{h^{*}_{n}}^{D_{2}/8-p/2}\\\le &  C(\log n)^{p/2} v^{p}(h^{*}_{n}){h^{*}_{n}}^{D_{2}/8}\leq  C{(\log n)^{p/2}} v^p(h_n^*), 
\end{align*} as $h_n^*\leq 1$. 
This ensures the desired result.

\subsection{Proof of Corollary \ref{cor}}

By Equations \eqref{eq:lienbiaisvariance} and \eqref{eq:lien_biais_variance_Dw}, we have that, for any $x\in I_0$, 
$ |f_{h_0(x)}(x)-f(x)|\leq v(h_0(x)).$
Since the bandwidth $h_0(x)$ does not necessarily belong to $\mathcal{H}_n$, we define $$h_{0,n}(x)=\max\{h\in\mathcal{H}_n, h\leq h_0(x)\}.$$ Therefore, it holds that $h_{0,n}(x)\geq a^{-1}h_0(x)$, then $v(h_{0,n}(x))\leq av(h_0(x))$.
By construction, for any $h\leq h_{0,n}(x)$, 
\[|f_{h}(x)-f(x)|\leq v(h_{0,n}(x))\leq D_1v(h_{0,n}(x))\lambda(h_{0,n}(x)).\]
So $h_{0,n}(x)\leq h_n^*(x)$ and, as the variance is increasing, 
\[\E\left[\|{\hat{f}_{\hat{h}_n}-f\|}^p_{p,I_0}\right]\leq C(\log(n))^{p/2}\int_{I_0} v^p(h_n^*(x))dx\leq aC(\log(n))^{p/2}\int_{I_0}v^p(h_0(x))dx.\]
As 
\[\int_{I_0} v^p(h_0(x))dx\leq \E\left[\|{\hat{f}_{{h}_0}-f\|}^p_{p,I_0}\right],\]
we obtain the desired result.

\bibliographystyle{apalike}
	
{\small	
}
	
\end{document}